\documentclass[12pt]{article} %***
\usepackage[sectionbib]{natbib}
\usepackage{array,epsfig,fancyheadings,rotating}
\usepackage[]{hyperref}  %<----modified by Ivan
%%%%%%%%%%%%%%%%%%%%%%%%%%%%%%%%%%%%
\usepackage{sectsty, secdot}
%\sectionfont{\fontsize{12}{15}\selectfont}
\sectionfont{\fontsize{12}{14pt plus.8pt minus .6pt}\selectfont}
\renewcommand{\theequation}{\thesection\arabic{equation}}
\subsectionfont{\fontsize{12}{14pt plus.8pt minus .6pt}\selectfont}
%%%%%%%%%%%%%%%%%%%%%%%%%%%%%%%%%%%%%%%%%%%%%%%%%%%%%%%%%%%%%%%%%%%%%%%%%%%%%%%%%%%%%%%%

\textwidth=31.9pc
\textheight=46.5pc
\oddsidemargin=1pc
\evensidemargin=1pc
\headsep=15pt
\topmargin=.6cm
\parindent=1.7pc
\parskip=0pt

\usepackage{amsmath}
\usepackage{amssymb}
\usepackage{amsfonts}
\usepackage{multirow}
\usepackage{amsthm}

\setcounter{page}{1}
\newtheorem{theorem}{Theorem}

\newtheorem{proposition}{Proposition}
\newtheorem{assumption}{Assumption}
\theoremstyle{definition}

\pagestyle{fancy}

%%%%%%%%%%%%%%%%%%%%%%%%%%%%%%%%%%%%%%%%%%%%%%%%%%%%%%%%%%%%%%%%%%%%%%%%%%%%%%%%%%%%%%%%%%%%%%%%%%%%%%%%%%%%%%%%%%%%%%%%%%%%
\pagestyle{fancy}

\lhead[\fancyplain{} \leftmark]{}
\chead[]{}
\rhead[]{\fancyplain{}\rightmark}
\cfoot{}
%\headrulewidth=0pt  %<-modified by Ivan

%%%%%%%%%%%%%%%%%%%%%%%%%%%%%%%%%%%%%%%%%%%%%%%%%%%%%%%%%%%%%%%%%%%%%%%%%%%%%%%%%%%%%%%%%%%%%%%%%%%%%%%%%%%%%%%%%%%%%%%%%%%%
%%%%%%%%%%%%%%%%%%%%%%%%%%%%%%%%%%%%%%%%%%%%%%%%%%%%%%%%%%%%%%%%%%%%%%%%%%%%%%%%%%%%%%%%%%%%%%%%%%%%%%%%%%%%%%%%%%%%%%%%%%%%

\begin{document}

%%%%%%%%%%%%%%%%%%%%%%%%%%%%%%%%%%%%%%%%%%%%%%%%%%%%%%%%%%%%%%%%%%%%%%%%%%%%%%%%%%%%%%%%%%%%%%%%%%%%%%%%%%%%%%%%%%%%%%%%%%%%
%%%%%%%%%%%%%%%%%%%%%%%%%%%%%%%%%%%%%%%%%%%%%%%%%%%%%%%%%%%%%%%%%%%%%%%%%%%%%%%%%%%%%%%%%%%%%%%%%%%%%%%%%%%%%%%%%%%%%%%%%%%%

\renewcommand{\baselinestretch}{2}

\markright{ \hbox{\footnotesize\rm 
%{\footnotesize\bf 24} (201?), 000-000
}\hfill\\[-13pt]
\hbox{\footnotesize\rm
%\href{http://dx.doi.org/10.5705/ss.20??.???}{doi:http://dx.doi.org/10.5705/ss.20??.???}
}\hfill }

\markboth{\hfill{\footnotesize\rm Sneha Jadhav AND Shuangge Ma} \hfill}
{\hfill {\footnotesize\rm  Functional Measurement Error in Functional Regression} \hfill}

\renewcommand{\thefootnote}{}
$\ $\par

%%%%%%%%%%%%%%%%%%%%%%%%%%%%%%%%%%%%%%%%%%%%%%%%%%%%%%%%%%%%%%%%%%%%%%%%%%%%%%%%%%%%%%%%%%%%%%%%%%%%%%%%%%%%%%%%%%%%%%%%%%%%

\fontsize{12}{14pt plus.8pt minus .6pt}\selectfont \vspace{0.8pc}
\centerline{\large\bf Functional Measurement Error in Functional Regression}
%\vspace{2pt} \centerline{\large\bf IF A SECOND LINE IS NEEDED}
\vspace{.4cm} \centerline{Sneha Jadhav, Shuangge Ma} \vspace{.4cm} \centerline{\it
Yale University} \vspace{.55cm} \fontsize{9}{11.5pt plus.8pt minus
.6pt}\selectfont

%%%%%%%%%%%%%%%%%%%%%%%%%%%%%%%%%%%%%%%%%%%%%%%%%%%%%%%%%%%%%%%%%%%%%%%%%%%%%%%%%%%%%%%%%%%%%%%%%%%%%%%%%%%%%%%%%%%%%%%%%%%%

\begin{quotation}
\noindent {\it Abstract:}  Measurement error is an important problem that has not been very well studied in the context of Functional Data Analysis. To the best of our knowledge,  there are no existing methods that address the presence of functional measurement errors in generalized functional linear models.  A framework is proposed for estimating the slope function in the presence of measurement error in the generalized functional linear model with a scalar response. This work extends the conditional-score method to the case when both the measurement error and the independent variables lie in an infinite dimensional space. Asymptotic results are obtained for the proposed estimate and its behavior is studied via simulations, when the response is continuous or binary. It's performance on real data is demonstrated through a simulation study based on the Canadian Weather data-set, where errors are introduced in the data-set and it is observed that the proposed estimate indeed performs better than a naive estimate that ignores the measurement error. 

\vspace{9pt}
\noindent {\it Key words and phrases:}
 Measurement Error, Error in Variables, Functional Data Analysis, Generalized Functional Linear Models.
\par
\end{quotation}\par

\def\thefigure{\arabic{figure}}
\def\thetable{\arabic{table}}

\renewcommand{\theequation}{\thesection.\arabic{equation}}

\fontsize{12}{14pt plus.8pt minus .6pt}\selectfont

%\setcounter{section}{1} %***
%\setcounter{equation}{0} %-1
%\noindent {\bf 1. Introduction}\\
\section{Introduction}
Measurement error in multivariate data is a well-studied problem, and consequently there are multiple ways to address it Carroll et al.\ (2004). This problem arises in many diverse fields like nutrition, environmental studies and so on Carroll \& Raymond \ (1998). It is well established that ignoring this error can lead to several problems like bias in the estimation of regression parameters. For a detailed discussion on the repercussions of ignoring measurement error, refer to Carroll et al.\ (2004). Measurement error also arises in functional data where a large number of repeated measurements for variables are available. It is only natural to assume that if there is error in the data, it is present at all of the repeated measurements i.e. the measurement error is a functional variable. In the Functional Data Analysis literature, there are limited tools available to handle measurement errors. Most of the available literature assumes measurement error at the discrete points at which the functions are observed (Yao, M{\"u}ller \& Wang, 1998; Cardot, Crambes et al. 2007 ; James, 2002; Crambes, Kneip \& Sarda, 2009; Goldsmith et al., 2011 ,Goldsmith, Wand \& Crainiceanu, 2011). Specifically, denote the observed values of the function $X(\cdot)$ at a grid of points $a\leq t_1<...<t_m\leq b$ by $X_1,...,X_m$. All of these works assume $X_k=X(t_k)+e_k,\,k=1,...,m$ and the errors $e_k$ are indepedent or uncorrelated. This assumption on errors is very restrictive, and the asymptotics as well as the performance of these methods depends on the validity of these assumptions. A recent work Cai (2015), allows for correlation between the measurement errors however, imposes parametric structures on the covariance. To the best of our knowledge, there is only one work that considers the measurement error to be functional in nature (Chakraborty \& Panaretos, 2017). The measurement error model considered is $ W(\cdot)=X(\cdot)+U(\cdot),$ where $U(\cdot)$ is a measurement error stochastic process. In order to avoid identifiability issues, certain conditions are imposed. It is assumed that the measurement error process is at a much finer scale than the true covariate. This is achieved by imposing the following two conditions: 1) there exists $\delta>0$, such that $cov(U(t),U(s))=0$ if $|s-t|>\delta $ and 2) the covariance operator of $X(\cdot)$ is analytic on an open set containing $|s-t|\leq \delta$. Estimation of the covariance function of the error process under these conditions, is given in Descary \& Panaretos (2016). The assumptions are more general than those made previously but still quite restrictive. Moreover, none of the methods can accommodate measurement error process with correlations in a regression model with a binary response.

To fill the knowledge gap, we develop a framework that will allow for measurement error with a more general correlation structure and response that is binary or has a normal distribution. This proposed framework is based on the conditional-score method proposed in Stefanski \& Carroll (1987). We use the Karhunen-Lo{\'e}ve expansion to obtain estimating equations similar to those in Stefanski \& Carroll (1987). However, in our case the number of parameters diverge, making this framework and involved asymptotics non-trivial. The simulations presented later, demonstrates how measurement errors lead to an in-correct estimate of the slope function.

\setcounter{equation}{0} %-1
%\noindent {\bf 2. The Second Section}
\section{Methods}
Given covariate $X(t),\,t\in L^2=L^2[a,b],$ assume 
that $Y$ has the distribution  $f_{Y}\{y;\theta_1,x(\cdot)\}$ with respect to a dominating measure $h$ given by:
\begin{align}  \text{exp}\left[\dfrac{y\{\tilde{\beta}_0+\int_a^b\tilde{\beta}(t)x(t)d(t)\}-b\{\tilde{\beta}_0+\int_a^b\tilde{\beta}(t)x(t)dt\}}{a(\tilde{\phi})}+c(y,\tilde{\phi})\right]  \label{em1}.
\end{align}
All integrals hereafter are taken over $[a,b]$ and , $\theta_1=(\tilde{\beta}_0,\tilde{\beta}(\cdot),\tilde{\phi})$. Refer to \cite{stefanski1987conditional} for details on all the distributions that are included in the above model.

Let $(\rho_k)_{k=1}^{\infty}$ be orthonormal basis functions in the $L^2$ space. Using basis expansion, we obtain
\begin{align*}
\int\tilde{\beta}(t)X(t)dt=\sum\limits_{k=1}^{\infty} X_{k}\tilde{\beta}_k,
\end{align*}

where $X_{k}=\int X(t)\rho_k(t) dt$, $\tilde{\beta}_k=\int \tilde{\beta}(t)\rho_k(t) dt, k\geq1$. Thus, model \eqref{em1} has infinitely many parameters. We address this issue of infinite dimension with a truncation strategy. Denote $\tilde{\beta}=(\tilde{\beta}_1,...,\tilde{\beta}_{p_n})^{'} $ and $\tilde{\theta}=(\tilde{\beta}_0,\tilde{\beta},\tilde{\phi})$. Instead of the model \eqref{em1}, we work with the following sequence of models with increasing dimension $p_n$:
\begin{align}  f_{Y}(y;\tilde{\theta},x)&=\text{exp}\left\{\dfrac{y\left(\tilde{\beta}_0+\sum\limits_{k=1}^{p_n} x_{k}\tilde{\beta}_k\right)-b\left(\tilde{\beta}_0+\sum\limits_{k=1}^{p_n} x_{k}\tilde{\beta}_k\right)}{a(\tilde{\phi})}+c(y,\tilde{\phi})\right\} , \label{m2}
\end{align}
where $p_n \to \infty$ as $n \to \infty$.
Due to measurement error, we observe surrogate variable $W(\cdot)$ as opposed to the true covariate $X(\cdot)$ . Assume 
\begin{align}
W(\cdot)=X(\cdot)+U(\cdot), \label{me}
\end{align}
where $U(\cdot)$ is a Gaussian process with mean function $0$ and covariance function $K(\cdot,\cdot)$. Take $(\rho_i)_{i=1}^{\infty}$ to be the basis constructed from the eigenfunctions of the integral operator $\mathcal{K}$ associated with the covariance function $K(\cdot,\cdot)$.
Denote $W_{k}=\int W(t)\rho_k(t) dt$ and $U_{k}=\int U(t)\rho_k(t) dt$. This yields the following measurement error set-up for \eqref{m2}: $$W_{k}=X_{k}+U_{k},\, 1\leq k\leq p_n,$$ 
where $U_{k}$'s are independent and $U_{k}\sim N(0,\lambda_k)$ with $\lambda_k$ being the eigenvalue associated with the $k^{th}$ eigenfunction of $\mathcal{K}$. 
This set-up is similar to that in \cite{stefanski1987conditional} where the authors proposed the conditional-score based sufficiency estimator for generalized linear models. For a sample of size $n$, assume that $X_i(\cdot),\,i=1,...,n$ are deterministic and denote $\mathbf{X}_i=(X_{i1},...,X_{ip_n})^{'}$ and $\mathbf{W}_i=(W_{i1},...,W_{ip_n})^{'}$. Let $\widetilde{W}_i^{(l)}(\cdot)$ denote the $l^{th}$ replicate of    
the function $W_i(\cdot) $ and $\widetilde{W}_{ik}^{(l)}=\int \rho_k(t)\widetilde{W}_i^{(l)}(t)dt$. Let $\widetilde{\mathbf{W}}_{i}^{(l)}=(\widetilde{W}_{i1}^{(l)},...,\widetilde{W}_{ip_n}^{(l)})'. $ The likelihood of  $\widetilde{W}_i=(\widetilde{\mathbf{W}}_{i}^{(1)},...,\widetilde{\mathbf{W}}_{i}^{(m)})'$ is
\begin{align}
f_{\widetilde{W}_i}(\widetilde{w}_i,\tilde{\theta},\mathbf{x}_i)&= \prod\limits_{j=1}^{m} \dfrac{(2\pi)^{-p_n/2}}{|\Omega_1|}\text{exp}\left\{ \dfrac{-1}{2}(\widetilde{\mathbf{w}}_i^{(j)}-\mathbf{x}_i)^T(\widetilde{\mathbf{w}}_i^{(j)}-\mathbf{x}_i)\right\}, \label{xdensity}
\end{align}
where, $\Omega_1=\text{diag}(\lambda_1,...,\lambda_{p_n})$ is the unknown covariance matrix of the measurement error vector and $m$ denotes the number of replicates. Without loss of generality we assume that $m=1$ and $\Omega=\Omega_1/a(\tilde{\phi})$ is known. In practice we use the estimate of $\Omega_1$ and $a(\tilde{\phi})$ that are given subsequently. The estimate of $\Omega_1$ is obtained from the estimate of the covariance function $K(\cdot,\cdot)$. We can use the method in Chakraborty \& Panaretos (2017) to estimate this covariance function. In this case, the covariance function has to satisfy the two assumptions mentioned in the previous section.  If not, then additional data like replicates is required so that the covariance function can be estimated without these assumptions. Thus, there is always trade-off between additional data and assumptions in the measurement error framework. Most methods related to functional data assume that there is no measurement error, a few that relax this assumption make very strong assumptions regarding its structure. We relax these stringent assumptions in presence of replicates. Thus, if there is any reason to suspect measurement error, a sound strategy is to collect additional required information right at the outset instead of relying on unverifiable assumptions regarding measurement error. Let $Y, \, \mathbf{W}$ denote random variables with the same distribution as $Y_i,\,\mathbf{W}_i,\, i=1,...,n $ respectively.  We refer to the variable $Y$ as the response variable. For now, we drop the subscript $i$ and use it when necessary. We assume that $\mathbf{W}$ has no information on $Y$ other than what is contained in $\mathbf{X}$ that is,
\begin{align*}
f_{Y,\mathbf{W}}(y,\mathbf{w};\tilde{\theta},\mathbf{x})=f_Y(y;\tilde{\theta},\mathbf{x})f_\mathbf{W}(\mathbf{w},\tilde{\theta},\mathbf{x}).
\end{align*}

Let $\mathbf{\Delta}(\beta)=\mathbf{W}+Y\Omega\beta=(\Delta_{1}(\beta),...,\Delta_{p_n}(\beta))^{'},\, \mathbf{\Delta}(\tilde{\beta})=\mathbf{\Delta}=(\Delta_{1},...,\Delta_{p_n})^{'}$. The distribution $f_{Y|\mathbf{\Delta}}(y|\mathbf{\Delta}=\mathbf{\delta};\tilde{\theta})$ of  $Y|\mathbf{\Delta}$ is given by
\begin{align}
\text{exp}\left[y\tilde{\eta}-\dfrac{1}{2}y^2 \tilde{\beta}^{'}\Omega\tilde{\beta}/a(\tilde{\phi})+c(y,\tilde{\phi})-\text{log}\left\{S(\tilde{\eta},\tilde{\beta},\tilde{\phi})\right\} \right],\label{cond}
\end{align}	
where $S(\tilde{\eta},\tilde{\beta},\tilde{\phi})=\displaystyle \int \text{exp}\left\{y\tilde{\eta}-\dfrac{1}{2}y^2 \tilde{\beta}^{'}\Omega\tilde{\beta}/a(\tilde{\phi})+c(y,\tilde{\phi})\right\}dh(y)$, $\tilde{\eta}=(\tilde{\beta}_0+\mathbf{\delta}^{'}\tilde{\beta})/a(\tilde{\phi})$.
This distribution belongs to the exponential family. Let $\theta=(\beta_0,\beta,\phi)^{'},\, \beta=(\beta_1,...,\beta_{p_n})^{'}, \\ \Psi(y,\mathbf{w},\theta)=(\partial/\partial \theta)log \, f_{Y/\mathbf{\Delta}}(y|\mathbf{\delta};\theta),\,\mathbf{\delta}(\beta)=\mathbf{w}+y\Omega\beta $. We use the following estimating equation to estimate $\theta$: 
\begin{align}\sum_{i=1}^{n}\Psi(Y_i,\mathbf{W}_i,\theta)=\mathbf{0}, \label{est1}
\end{align}
where $\Psi(Y_i,\mathbf{W}_i,\theta)$ is,
\begin{align*}
\begin{bmatrix}
\left[Y_i-E(Y_i|\Delta(\beta)=\mathbf{\delta}_i(\beta))\right]/a\left(\phi\right) \\
\left[Y_i-E(Y_i|\mathbf{\Delta}_i(\beta)=\mathbf{\delta}_i(\beta))\right]\dfrac{\mathbf{\delta}_i(\beta)}{a(\phi)}-\left[y_i^2-E(Y_i^2|\mathbf{\Delta}_i(\beta)=\mathbf{\delta}_i(\beta))\right]\dfrac{\Omega\beta}{a(\phi)} \\
r(Y_i,\mathbf{W}_i,\theta)-E(r(Y_i,\mathbf{W}_i,\theta)|\mathbf{\Delta}_i(\beta)=\mathbf{\delta}_i(\beta))
\end{bmatrix} ,
\end{align*}
$r(Y_i,\mathbf{W}_i,\theta)=\dfrac{\partial c(Y_i,\theta)}{\partial\phi}-Y_i\dfrac{\beta_0+\mathbf{\delta}_i(\beta)^{'}\beta}{a^2(\phi)}a'(\phi)+Y_i^2\dfrac{\beta^{'}\Omega\beta}{2a^2(\phi)}a'(\phi),\, a^{'}(x)$  denotes the derivative of function $a$ and $\mathbf{0}$  here and hence onwards denotes a  column vector containing $0$'s of appropriate dimension.
Note that the estimating equation \eqref{est1} is unbiased for $\theta$ and the corresponding estimator does not maximize the conditional likelihood. We study this estimator in greater details when $Y$ is binary and gaussian.

%\begin{figure}[h!]
%\includegraphics [angle=-90, scale=0.45]{carat}\par
%\centerline{\epsfig{file=d:/sinicas/simuh3.eps,angle=-90,width=4.5in}}\par
%\centerline{\epsfig{file=d:/sinicas/simuh4.eps,width=4.5in}}\par
%\centerline{\epsfig{file=logo.eps,width=4.5in}}\par  %<- modified by Ivan  (You need to put figure in the same folder; or you must specify the path to the figure.)
%\caption{Caption of the figure.}
%\end{figure}

\subsection{Binary Response}
When $Y$ is binary, the truncated model is $P_{\tilde{\theta}}(Y=1|\textbf{X})=
F(\tilde{\beta}_0+\textbf{X}^{'}\tilde{\beta}),\,F(t)=1/(1+e^{-t})$. For this case, it is easy to see that \eqref{cond} gives $P_{\tilde{\theta}}(Y=1|\Delta=\delta)=F\{\tilde{\beta}_0+(\delta-0.5\Omega\tilde{\beta})^{'}\tilde{\beta}\} $. Combining this with \eqref{est1}, we get  the following estimating equations

\begin{align*}
\sum_{i=1}^{n}\Psi(Y_i,\mathbf{W}_i,\theta)=\sum_{i=1}^{n}[Y_i-F[\beta_0+\{\delta_i(\beta)-0.5\Omega\beta\}^{'}\beta]\begin{bmatrix}
1\\
\delta_i(\beta)-\Omega\beta\end{bmatrix}=\begin{bmatrix}
0\\
\textbf{0}\end{bmatrix}.
\end{align*}
Let
\begin{align*}
&\beta_c=(\beta_0,\beta)^{'},\, X_c=(1,\textbf{X}^{'})^{'},\,W_c=(1,\textbf{W}^{'})^{'},\, U_c=(0,U^{'}),\, \\&\Omega_c=\begin{bmatrix} 0, \textbf{0}'\\
\textbf{0}, \Omega \end{bmatrix},\, \delta_c(\beta_c)=W_c+Y\Omega_c\beta_c .
\end{align*}
 Using these notations we can rewrite $U(\beta_c)=\sum_{i=1}^{n}\Psi(Y_i,\mathbf{W}_i,\theta)=\textbf{0}, $  as
\begin{align}
U(\beta_c)=\sum_{i=1}^{n}[Y_i-F\{\delta_{ci}(\beta_c)^{'}\beta_c-0.5\beta_c^{'}\Omega_c\beta_c\}]\begin{bmatrix}
\delta_{ci}(\beta_c)-\Omega_c\beta_c\end{bmatrix}=\textbf{0}. \label{b_score}
\end{align}

We now state the assumptions needed to show the existence and consistency of the solution to this set of equations. We denote the Frobenius norm for a matrix and euclidean norm for vectors by $ \|\cdot \|$ , and a positive constant by $c$.

\begin{assumption}\label{beta}
	Assume that	$ \int \tilde{\beta}^2(t) dt < \infty $. Then $\|\tilde{\beta}\| < m_1< \infty.$
\end{assumption}

\begin{assumption}\label{x}
	Assume that $ \int X_i^2(t)dt < m_2 <\infty.$
\end{assumption}

\begin{assumption}\label{omega}
	Let	$ \| \Omega \| <  \infty $.
\end{assumption}

\begin{assumption}\label{rateb}
	Let	$ p_n^4/n \rightarrow 0 $.
\end{assumption}

\begin{assumption}\label{eigen}
	Let $B=\lambda_{min}\left(\sum_{i=1}^{n} W_{ic}W'_{ic}/n \right)$. Then
	$$\lambda_{max}(\Omega)\text{exp}(\lambda_{max}(\Omega)m_1) \leq B \dfrac{ \text{exp}\left(\underset{1\leq i \leq n}{inf} W_{ic}^{'}\tilde{\beta}-\underset{1\leq i \leq n}{sup}  W_{ic}^{'}\tilde{\beta}\right)}{1+\text{exp}\left(\underset{1\leq i \leq n}{inf} W_{ic}^{'}\tilde{\beta}\right)} \quad{a.s.}   $$
\end{assumption}

In the rest of the paper supremum and infimum are taken over $i=1,...,n$ when not indicated explicitly.
Without loss of generality we assume that the function $\bar{X} =\sum_{i=1}^{n} n^{-1} X_i(t)=0$. This can be achieved by replacing $X_i(\cdot)$ by $X_i^c(\cdot)=X_i(\cdot)-\bar{X}(\cdot)$ and adjusting the intercept in the \eqref{em1}. The corresponding  adjusted measurement error model is $W_i^c(\cdot)=X_i^c(\cdot)+U_i(\cdot)$, where $W_i^c(\cdot)=W_i(\cdot)-\sum_{i=1}^{n} n^{-1} E\{W_i(t)\} $. Let $W_{ik}^c=\int W_i^c(t)\rho_{k}(t) dt,\,X_{ik}^c=\int X_i^c(t)\rho_{k}(t) dt $ and $\overline{W}_k=\sum_{i=1}^{n} n^{-1}E(W_{ik}).$ The adjusted error model leads to $W_{ik}^c=X_{ik}^c+U_{ik}$, $W_{ik}^c=W_{ik}-\overline{W}_k.$ Thus, we need to replace the observed variables $W_{ik}$ by $W_{ik}-\overline{W}_k$ for $k=1,...,p_n$. Using law of large numbers for independent variables we can obtain a consistent estimate of $\overline{W}_k$ as $\sum_{i=1}^{n} n^{-1}W_{ik}$ . 

The following theorem is sufficient to prove weak consistency and existence of the estimator. This follows from Theorem 6.3.4 of \cite{ortega1970iterative}.
\begin{theorem}\label{con} For all $\epsilon>0$, there exists a constant $\zeta>0$ such that for sufficiently large $n$,
	$$ P\left(\underset{|| \beta_c-\tilde{\beta}_c||= \zeta \sqrt{p_n/n}}{\sup}(\beta_c-\tilde{\beta_c})'U(\beta_c)<0\right)\geq 1-\epsilon.$$
\end{theorem}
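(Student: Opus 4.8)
\medskip
\noindent\textbf{Proof proposal.}
The plan is to verify the hypothesis of Theorem 6.3.4 of \cite{ortega1970iterative}: once we show that, on the sphere $\{\beta_c:\|\beta_c-\tilde{\beta}_c\|=\zeta\sqrt{p_n/n}\}$, the inequality $(\beta_c-\tilde{\beta}_c)'U(\beta_c)<0$ holds with probability at least $1-\epsilon$ for all large $n$, that result supplies a zero of $U$ in the open ball, which is the estimator in question. So everything reduces to bounding $\gamma'U(\beta_c)$ uniformly over that sphere, where $\gamma=\beta_c-\tilde{\beta}_c$, and I would do this through a second order Taylor expansion of $U$ about the truth,
\[
\gamma'U(\beta_c)=\gamma'U(\tilde{\beta}_c)+\gamma'\nabla U(\tilde{\beta}_c)\,\gamma+R_n,\qquad R_n=\gamma'\bigl[\nabla U(\beta_c^{*})-\nabla U(\tilde{\beta}_c)\bigr]\gamma,
\]
with $\beta_c^{*}$ on the segment joining $\beta_c$ and $\tilde{\beta}_c$, and then bound the three pieces on $\|\gamma\|=\zeta\sqrt{p_n/n}$.

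For the linear term, the conditional-score construction makes the estimating function \eqref{b_score} unbiased: the multiplier $\delta_{ci}(\tilde{\beta}_c)-\Omega_c\tilde{\beta}_c$ is a function of $\mathbf{\Delta}_i$, and by \eqref{cond} its conditional expectation satisfies $E[Y_i-F\{\delta_{ci}(\tilde{\beta}_c)'\tilde{\beta}_c-0.5\,\tilde{\beta}_c'\Omega_c\tilde{\beta}_c\}\mid\mathbf{\Delta}_i]=0$, so $E\,U(\tilde{\beta}_c)=\mathbf{0}$. Since $Y_i$ is bounded, $\|\mathbf{X}_i\|^2\le m_2$ by Assumption \ref{x} and Parseval, $U_{ik}\sim N(0,\lambda_k)$ with $\sum_k\lambda_k<\infty$, and Assumptions \ref{beta}--\ref{omega} hold, every summand of $U(\tilde{\beta}_c)$ has a second moment bounded uniformly in $i$, whence $E\|U(\tilde{\beta}_c)\|^2=O(n)$ and $\|U(\tilde{\beta}_c)\|=O_p(\sqrt{n})$. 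By Cauchy--Schwarz, $\sup_{\|\gamma\|=\zeta\sqrt{p_n/n}}\gamma'U(\tilde{\beta}_c)\le\|\gamma\|\,\|U(\tilde{\beta}_c)\|=O_p(\zeta\sqrt{p_n})$.

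For the quadratic term, differentiating \eqref{b_score} and inserting $\delta_{ci}(\beta_c)=W_{ic}+Y_i\Omega_c\beta_c$ shows that $-\nabla U(\tilde{\beta}_c)$ equals $\sum_i F'(g_i)\,W_{ic}W_{ic}'$ plus terms whose contribution to the quadratic form $\gamma'(\cdot)\gamma$ is of order $n\|\gamma\|^2\lambda_{max}(\Omega)$, coming from the $Y_i\Omega_c\beta_c$ parts of $\delta_{ci}(\beta_c)$ and from differentiating the residual multiplier $\delta_{ci}(\beta_c)-\Omega_c\beta_c$, where $g_i$ abbreviates the argument of $F$. Because $F'>0$, the leading quadratic form is at least $\inf_i F'(g_i)\cdot\lambda_{min}(\sum_i W_{ic}W_{ic}')\,\|\gamma\|^2=\inf_i F'(g_i)\,nB\,\|\gamma\|^2$, and Assumption \ref{eigen} is exactly the inequality guaranteeing that this dominates the lower-order terms, so that $\gamma'\nabla U(\tilde{\beta}_c)\gamma\le-c\,n\|\gamma\|^2=-c\,\zeta^2 p_n$ with probability tending to one (one also invokes $\lambda_{max}(n^{-1}\sum_i W_{ic}W_{ic}')=O_p(1)$ to control the cross terms). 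For the remainder, boundedness of $F'$ and $F''$, boundedness of $Y_i$, bounded third moments of $\|W_{ic}\|$, and $\|\beta_c^{*}\|$, $\|\Omega\|$ bounded on the ball yield crude $O_p(n)$ bounds on the second derivatives of $U$, hence $\|\nabla U(\beta_c^{*})-\nabla U(\tilde{\beta}_c)\|=O_p(n\,p_n^{3/2}\|\gamma\|)$ uniformly over the ball and $\sup_{\|\gamma\|=\zeta\sqrt{p_n/n}}|R_n|=O_p(n\,p_n^{3/2}\|\gamma\|^3)=O_p(\zeta^3 p_n^{3}n^{-1/2})$, which is $o_p(\zeta^2 p_n)$ precisely because $p_n^4/n\to0$ by Assumption \ref{rateb}. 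Combining the three bounds, on the sphere
\[
\sup_{\|\gamma\|=\zeta\sqrt{p_n/n}}\gamma'U(\beta_c)\le O_p(\zeta\sqrt{p_n})-c\,\zeta^2 p_n+o_p(\zeta^2 p_n),
\]
and since $p_n\to\infty$ the negative quadratic term dominates, so for any fixed $\zeta>0$ the right-hand side is negative with probability at least $1-\epsilon$ for all large $n$, which is the assertion.

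I expect the main obstacle to be the quadratic term. In the classical fixed-dimension conditional-score analysis one passes to a population information matrix, but here $\nabla U(\tilde{\beta}_c)$ is a growing, genuinely random matrix: the argument of $F$ depends on $Y_i$ and, through $W_{ic}$, on the measurement error $U_{ik}$, so negative-definiteness with eigenvalues of order $-n$ cannot be read off a population object and must be established along the realized path, showing that $\inf_i F'(g_i)$ is not too small relative to the smallest eigenvalue of the empirical second-moment matrix while the error-induced terms do not swamp it. Assumption \ref{eigen} is calibrated to encode exactly this balance, and the supporting technical work is the moment and tail bounds behind $\|U(\tilde{\beta}_c)\|=O_p(\sqrt{n})$, $\lambda_{max}(n^{-1}\sum_i W_{ic}W_{ic}')=O_p(1)$, and the crude $O_p(n)$ control of the second derivatives, after which the rate condition $p_n^4/n\to0$ completes the bound on $R_n$.
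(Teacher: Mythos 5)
Your proposal is correct and follows essentially the same route as the paper: the same Taylor expansion $(\beta_c-\tilde{\beta}_c)'U(\beta_c)=(\beta_c-\tilde{\beta}_c)'U(\tilde{\beta}_c)+(\beta_c-\tilde{\beta}_c)'J(\beta_c^{*})(\beta_c-\tilde{\beta}_c)$, with the three pieces controlled exactly as in Propositions \ref{score}, \ref{prop3} and \ref{prop4} --- the unbiased score giving $O_p(\zeta\sqrt{p_n})$, Assumption \ref{eigen} forcing $(\beta_c-\tilde{\beta}_c)'J(\tilde{\beta}_c)(\beta_c-\tilde{\beta}_c)\leq -c\zeta^2p_n+\zeta^2o_p(p_n)$, and Assumption \ref{rateb} killing the remainder. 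Your remainder bound is stated via a Lipschitz constant for the Jacobian rather than the paper's direct operator-norm bound $O_p(\sqrt{n}p_n)$ on $J(\beta_c^{*})-J(\tilde{\beta}_c)$, but both reduce to the same rate condition $p_n^4/n\to 0$, so this is not a substantive difference.
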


\textbf{Proof}
Using Taylors Theorem,
\begin{align*}
(\beta_c-\tilde{\beta}_c)^{'}U(\beta_c) &=(\beta_c-\tilde{\beta}_c)^{'} U(\tilde{\beta}_c)+ (\beta_c-\tilde{\beta}_c)^{'}J(\beta_c^{*})(\beta_c-\tilde{\beta}_c) \nonumber\\
&=A_1+A_2, 
\end{align*}
where $\beta_c^{*}$ lies between $\beta_c$ and $\tilde{\beta}_c$ i.e. $max(\|\beta_c^{*}-\beta_c\|,\|\beta_c^{*}-\tilde{\beta}_c)\|\leq \|\beta_c-\tilde{\beta}_c\|$.
From Proposition \ref{score}  we get
\begin{align*}
\underset{|| \beta_c-\tilde{\beta}_c||= \zeta \sqrt{p_n/n}}{\sup}	A_1 = \zeta O_p(\sqrt{n})\sqrt{p_n/n}=\zeta O_p(\sqrt{p_n}). 
\end{align*}
From Proposition \ref{prop3} and Proposition \ref{prop4},
\begin{align*}
\underset{|| \beta_c-\tilde{\beta}_c||= \zeta \sqrt{p_n/n}}{\sup}	A_2&=(\beta_c-\tilde{\beta}_c)^{'}\{J(\beta_c^{*})-J(\tilde{\beta_c})\}(\beta_c-\tilde{\beta}_c)\\
&\quad +(\beta_c-\tilde{\beta}_c)^{'}J(\tilde{\beta}_c)(\beta_c-\tilde{\beta}_c) \nonumber \\
& \leq \zeta^2 p_n^2/\sqrt{n}  -c \zeta^2p_n+\zeta^2o_p(p_n) \nonumber \\
&\leq \zeta^2 o(1) -c \zeta^2p_n+\zeta^2o_p(p_n).
\end{align*}	
Thus, $\underset{|| \beta_c-\tilde{\beta}_c||= \zeta \sqrt{p_n/n}}{\sup}(\beta_c-\tilde{\beta_c})'U(\beta_c) $ is asymptotically dominated by $ -c \zeta^2p_n$ and the result is proved.

\subsection{Gaussian Response}
We now consider the case where $Y$ has normal distribution. In this case, $ \Psi(Y_i,\mathbf{W}_i,\theta) $ in equations \eqref{est1} can be written as
\begin{align}
\Psi(Y_i,\mathbf{W}_i,\theta)= \begin{bmatrix}
\dfrac{1}{\sigma^2}(Y_i-\mu_i)  \label{c_est3}\\
\dfrac{\Omega\beta}{1+\beta'\Omega\beta}-\dfrac{1}{\sigma^2}\left[(Y_i-\mu_i)^2\Omega\beta-(Y_i-\mu_i)\{\mathbf{\delta}_i(\beta)-2\mu_i\Omega\beta\}\right]\\
\dfrac{-1}{2\sigma^2}+\dfrac{(Y_i-\mu_i)^2(1+\beta'\Omega\beta)}{2\sigma^4}             \end{bmatrix}.
\end{align}
Denote identity matrix by $I$. Let $\mathbf{\Delta}_i^*(\beta)=(I+\Omega\beta\beta')^{-1}\{\mathbf{\Delta}_i(\beta)-\beta_0\Omega\beta\}$ and $\mu_i=(\beta_0+\beta'\mathbf{\delta}_i(\beta))/(1+\beta'\Omega\beta)$. Consider the following equations:

\begin{align}
&\sum\limits_{i=1}^{n} (\mathbf{\Delta}_i^*(\beta)Y_i-\mathbf{\Delta}_i^*(\beta)\beta_0-\mathbf{\Delta}_i^*(\beta)\beta'\mathbf{\Delta}_i^*(\beta))=0 , \label{c_est4}\\
&\sum\limits_{i=1}^{n} (Y_i-\beta_0-\beta'\mathbf{\Delta}_i^*(\beta))=\textbf{0}, \nonumber\\
&\sigma^2=\dfrac{1+\beta'\Omega\beta}{n}\sum\limits_{i=1}^{n}(Y_i-\mu_i)^2. \nonumber
\end{align}
The solution to equations \eqref{c_est4} is also a solution to equations \eqref{c_est3}. Note that the above equations are non-linear. Let $\mathbf{W}^c=\mathbf{W}-\mathbf{\overline{W}},Y^c=Y-\overline{Y}$, where $\overline{Y}$ and $\overline{W}$ denote the average of $Y_1,...,Y_n$ and $\mathbf{W}_1,...,\mathbf{W}_n$ respectively. Then, the solution to equations \eqref{c_est4} is also a solution to the following equations:

\begin{align}
&U(\beta)=-\sum\limits_{i,j=1}^{n} \mathbf{W}_i^cY_i^c\beta^T\Omega\beta + \sum\limits_{i,j=1}^{n}Y^{c2}_i\Omega\beta-\sum\limits_{i,j=1}^{n}\mathbf{W}_i^c\mathbf{W}_i^{cT}\beta+\sum\limits_{i,j=1}^{n}\mathbf{W}_i^cY_i^c=\textbf{0} \nonumber \\
&\beta_0=\overline{Y}-\beta\overline{\mathbf{W}} ,\sigma^2=\dfrac{1+\beta^T\Omega\beta}{n}\sum\limits_{i=1}^{n}(Y_i-\mu_i)^2. \label{c_est5}
\end{align}

%The  equation  $U(\beta)=0$ is a quadratic equation and thus has two roots. There is no way of determining which root is the correct. One way to handle this problem is to solve the equation iteratively starting from the naive estimator. Naive estimator can be obtained by treating $W$ as the true covariate i.e. ignoring the measurement error.%
In addition to  Assumptions \ref{beta},\,\ref{x},\,\ref{omega}, we need the following assumptions to show that the equations \eqref{c_est5} has a solution $\hat{\beta}$ and that this solution is consistent.

\begin{assumption}\label{vary} Assume  $\lambda_{max}(\Omega_1) \leq \lambda_{min}\left(\sum\limits_{i=1}^{n}\mathbf{X}_i^c\mathbf{X}_i^{c'}/n\right)+\lambda_{min}(\Omega_1)$.
\end{assumption}

\begin{assumption}\label{rate} 
Let	$ p_n^3/n \to 0.$
\end{assumption}

\begin{theorem}\label{c_con} For all $\epsilon>0$, there exists a constant $\zeta>0$ such that for a sufficiently large $n$,
	$$ P\left(\underset{|| \beta-\tilde{\beta}||= \zeta \sqrt{p_n/n}}{\sup}(\beta-\tilde{\beta})^TU(\beta)<0\right)\geq 1-\epsilon.$$
\end{theorem}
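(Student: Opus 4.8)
The plan is to mirror the structure of the proof of Theorem~\ref{con}: apply a Taylor expansion of $U(\beta)$ around $\tilde{\beta}$, split the resulting expression into a linear term $A_1=(\beta-\tilde{\beta})^TU(\tilde{\beta})$ and a quadratic term $A_2=(\beta-\tilde{\beta})^TJ(\beta^*)(\beta-\tilde{\beta})$ with $J$ the Jacobian of $U$ and $\beta^*$ on the segment joining $\beta$ and $\tilde{\beta}$, and then show that on the sphere $\|\beta-\tilde{\beta}\|=\zeta\sqrt{p_n/n}$ the negative-definite contribution of $A_2$ dominates $A_1$ with probability at least $1-\epsilon$ once $\zeta$ is large. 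The unbiasedness of the estimating equation \eqref{est1} (noted after that display) gives $E\,U(\tilde{\beta})=\mathbf{0}$, so $A_1$ should be $\zeta\,O_p(\sqrt{p_n})$ by a bound on $E\|U(\tilde{\beta})\|^2$ together with Cauchy--Schwarz, exactly as in the binary case; the analogues of Propositions~\ref{score}, \ref{prop3}, \ref{prop4} for the Gaussian score \eqref{c_est5} presumably supply this.

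First I would record the explicit form of $U(\beta)$ from \eqref{c_est5} and compute its Jacobian $J(\beta)$. Because $U$ is now genuinely nonlinear in $\beta$ (there is the cubic term $\sum_i \mathbf{W}_i^cY_i^c\,\beta^T\Omega\beta$ and the term $\sum_i Y_i^{c2}\Omega\beta$, along with the quadratic form $\sum_i \mathbf{W}_i^c\mathbf{W}_i^{cT}\beta$), $J(\beta)$ will have a dominant deterministic-looking piece coming from $-\sum_i \mathbf{W}_i^c\mathbf{W}_i^{cT}$, which after centering concentrates around $-n\big(\sum_i \mathbf{X}_i^c\mathbf{X}_i^{c'}/n+\Omega_1\big)$ (since $W^c=X^c+U$, $U$ independent mean zero with covariance $\Omega_1$), plus correction terms involving $\Omega$, the cross term $\sum_i\mathbf{W}_i^cY_i^c$, and the $Y_i^{c2}$ term. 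The key point Assumption~\ref{vary} is engineered for: $\lambda_{\max}(\Omega_1)\le \lambda_{\min}(\sum_i\mathbf{X}_i^c\mathbf{X}_i^{c'}/n)+\lambda_{\min}(\Omega_1)$ should guarantee that, at $\tilde{\beta}$, the symmetric part of $J(\tilde{\beta})$ is negative definite with $\lambda_{\max}$ bounded above by $-c\,n$ for some fixed $c>0$, so that $(\beta-\tilde{\beta})^TJ(\tilde{\beta})(\beta-\tilde{\beta})\le -c\,n\|\beta-\tilde{\beta}\|^2=-c\,\zeta^2 p_n$. Then I would control the remainder $(\beta-\tilde{\beta})^T\{J(\beta^*)-J(\tilde{\beta})\}(\beta-\tilde{\beta})$ by a Lipschitz-in-$\beta$ bound on $J$ over the shrinking ball: each coordinate of $J(\beta^*)-J(\tilde{\beta})$ is $O(\|\beta^*-\tilde{\beta}\|)$ times something bounded (using Assumptions~\ref{beta}--\ref{omega} and moment bounds on $\mathbf{W}_i^c,Y_i^c$), so the whole term is of order $p_n^{3/2}\|\beta-\tilde{\beta}\|^3\cdot$(rate), which under Assumption~\ref{rate}, $p_n^3/n\to 0$, is $\zeta^3 o(p_n)$, hence negligible against $-c\zeta^2 p_n$ for $\zeta$ large. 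Collecting $A_1=\zeta O_p(\sqrt{p_n})$, the cross term $=\zeta^2 o_p(p_n)$, and the leading term $=-c\zeta^2 p_n$, the supremum over the sphere is asymptotically dominated by $-c\zeta^2 p_n<0$, and choosing $\zeta$ large makes the probability of this event at least $1-\epsilon$; Ortega--Rheinboldt Theorem~6.3.4 then yields a root inside the ball, which is the claimed consistency.

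The main obstacle I expect is the Jacobian analysis: unlike the binary logistic case where $J$ is a sum of rank-one-ish negative terms with a clean sign, here $J(\tilde{\beta})$ is a sum of several matrices of different natures (the negative $-\sum\mathbf{W}_i^c\mathbf{W}_i^{cT}$, the rank-one pieces from differentiating $\beta^T\Omega\beta$ against $\sum\mathbf{W}_i^cY_i^c$, the $\Omega$-weighted pieces from $\sum Y_i^{c2}\Omega\beta$), and one must show their sum is uniformly negative definite at $\tilde{\beta}$ with the right $-cn$ scaling — this is precisely where Assumption~\ref{vary} has to be used sharply, and it is delicate because the favorable sign must survive the random fluctuations of $\sum\mathbf{W}_i^c\mathbf{W}_i^{cT}/n$ around $\sum\mathbf{X}_i^c\mathbf{X}_i^{c'}/n+\Omega_1$ and the $O_p$-size perturbations from the $Y^c$-dependent pieces, uniformly in a neighborhood of $\tilde{\beta}$ of radius $\zeta\sqrt{p_n/n}$. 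A secondary technical nuisance is that $\mu_i$ and $\mathbf{\Delta}_i^*(\beta)$ involve $(1+\beta^T\Omega\beta)^{-1}$ and $(I+\Omega\beta\beta')^{-1}$; I would handle these by noting $1+\beta^T\Omega\beta$ is bounded above and below by Assumptions~\ref{beta} and~\ref{omega} and that $(I+\Omega\beta\beta')^{-1}$ has bounded operator norm, so they contribute only bounded multiplicative factors and do not disturb the rate bookkeeping.
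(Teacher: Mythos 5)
Your proposal follows essentially the same route as the paper's proof: a Taylor expansion of $U(\beta)$ about $\tilde{\beta}$, the bound $(\beta-\tilde{\beta})^TU(\tilde{\beta})=\zeta\,O_p(\sqrt{p_n})$ (Proposition~\ref{c_prop1}), a Lipschitz-type control of $J(\beta^*)-J(\tilde{\beta})$ rendered negligible by Assumption~\ref{rate} (Proposition~\ref{c_prop2}), and the $-c\zeta^2p_n$ bound on $(\beta-\tilde{\beta})^TJ(\tilde{\beta})(\beta-\tilde{\beta})$ obtained from the concentration of $\sum_i\mathbf{W}_i^c\mathbf{W}_i^{cT}/n$ around $\sum_i\mathbf{X}_i^c\mathbf{X}_i^{c'}/n+\Omega_1$ combined with Assumption~\ref{vary} (Proposition~\ref{c_prop3}). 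You correctly pinpoint where Assumption~\ref{vary} does its work, so the plan is correct and matches the paper's argument.
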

\textbf{Proof}
Let $\beta^*$ be such that $\text{max}(\|\beta^*-\tilde{\beta}\|,\|\beta^*-\beta\|) \leq \|\beta-\tilde{\beta}\|$. 
Using Taylor's expansion we obtain,
\begin{align*}
(\beta-\tilde{\beta})^T U(\beta )&=(\beta-\tilde{\beta})^T U(\tilde{\beta})+(\beta-\tilde{\beta})^TJ(\beta^*)(\beta-\tilde{\beta})\\
&= (\beta-\tilde{\beta})^T U(\tilde{\beta})+(\beta-\tilde{\beta})^T\{J(\beta^*)-J(\tilde{\beta})\}(\beta-\tilde{\beta})+(\beta-\tilde{\beta})^TJ(\tilde{\beta})(\beta-\tilde{\beta}).
\end{align*}	
From Proposition \ref{c_prop1},\,\ref{c_prop2},\,\ref{c_prop3},
\begin{align*}
\underset{|| \beta-\tilde{\beta}||= \zeta \sqrt{p_n/n}}{\sup}	(\beta-\tilde{\beta})^T U(\beta ) \leq O_p(\sqrt{p_n})\zeta+\zeta^2\dfrac{p_n^{1.5}}{\sqrt{n}}-c\zeta^2p_n+ \zeta^2o_p(p_n).
\end{align*}
Using Assumption \ref{rate} and with an appropriate choice $\zeta$ the theorem holds.

%%%%%%%%%%%%%%%%%%%%%%%%%%%%%%%%%%%%%%%%%%%%%%%%%%%%%%%%%%%%%%%%%%%%%%%%%%%%%%%%%%%%%%%%%%%%%%%%%%%%%%%%%%%%%%%%%%%%%%%%%%%%

%\lhead[\footnotesize\thepage\fancyplain{}\leftmark]{}\rhead[]{\fancyplain{}\rightmark\footnotesize\thepage}%Put this line in Page 2

%\setcounter{section}{3} %***
\setcounter{equation}{0} %-1
%\noindent {\bf 3. The Third Section}

\section{Simulations}
In this section, we compare performance of the proposed estimator with alternatives in the presence of measurement errors with various covariance structures and sample sizes. We first consider a case with Gaussian scalar response and then a binary one. In the following, $P(k)$ denotes a Poisson distribution with parameter $k$, $N(a,b)$ denotes normal distribution with mean $a$ and variance $b$.

True covariate function $X_i(t),t\in [0,1], i=1,...,n$ is generated using the Fourier basis as $$X_i(t)=\sum_{k=1}^{p}\varepsilon_{ik} \rho_{k}(t),\, i=1,...,n,\, p=2[n^{(1/5)}], \varepsilon_{ik} \sim P(2) . $$
We study the effect of measurement error with two covariance structures. In Setting $1$, we generate a centered Gaussian Process with covariance function $K(s,t)=\sigma_1 \text{exp}\{-(s-t)^2/(2l^2)\}$. This is a squared exponential function where the covariance depends on the distance between the points. We vary the value of $l$ which controls the range of dependence. In Setting $2$, we use covariance function of the Brownian Bridge: $K(s,t)= \sigma_2\{\text{min}(s,t)-st\}   $. The parameters $\sigma_1,\,\sigma_2$ are introduced to control the level of noise. For $\sigma_2=1$, we obtain the Brownian Bridge. The observed covariate is $W_i(t)=X_i(t)+U_i(t),\, i=1,...,n$. The effect function $\tilde{\beta}(\cdot)$ is generated using the Fourier basis as $\tilde{\beta}(t)=\sum_{k=1}^{p} \beta_k\rho_{k}(t),$ where $\beta_k=k^{-1}.$ 
The Gaussian response is generated as $ Y_i \sim N(\int X_i(t)\tilde{\beta}(t)dt,1)$. For this case, $\sigma_1=5$ in Setting 1. The binary response is generated from a Binomial distribution with $P(Y_i=1)=\text{exp}\{ \text{max}_{t_k}X_i(t_k)\}/[1+ \text{exp}\{\text{max}_{t_k}X_i(t_k)\}]$. For this case, we consider $\sigma_1=2$ in Setting 1. 

To implement our proposed method, we first need to estimate the error covariance structure. For this, we use 50 replicates of the function $W_i(\cdot), i=1,...,n$ denoted by $\widetilde{W}_{ij}(\cdot),\,i=1,...,n,\,j=1,...,50 $. Let $ \widetilde{W}_{i.}(\cdot)$ denote the mean function of $\widetilde{W}_i(\cdot)$. The estimate of the covariance function is considered to be
$$ \hat{K}(s,t)= \dfrac{\sum\limits_{i=1}^{n}\sum\limits_{j=1}^{50}(\widetilde{W}_{ij}(t)-\widetilde{W}_{i.}(t))(\widetilde{W}_{ij}(s)-\widetilde{W}_{i.}(s))}{n(50-1)}.$$

We next consider the problem of selecting the number of components  $p_n$ in the model. Cross-validation, which is a popular method to determine $p_n$ is biased in the presence of measurement error \citep{datta2017cocolasso}. Including a large number of components reduces the loss of information, on the other hand in the presence of measurement error, adds to the total measurement error in the model. Moreover, we use the Newton Raphson algorithm to solve the function $U(\beta_c)=0$ which involves inverting the derivative of $U(\beta)$. From the simulations, we observed that selecting $p_n\leq p$ helps to avoid singularity issues. Thus, ideally we should use $p_n=p$, however $p$ is unknown in practice. We choose $p_n$ as the threshold beyond which the proportion of variation explained by the first $p_n$ components levels off as indicated in Figure 1. We observe that, method can lead to an accurate determination of $p$ when the measurement error is not too large.   %Let the eigenvalues of $\Omega_1$ be denoted as $\lambda_{max}=\lambda_1\geq \lambda_2,...\lambda_{min}=\lambda_{J}.$ We found that using the plot of the proportion of variation explained by the principal components estimated the $p$ accurately. The proportion of variance explained by the first $k$ components is $\sum_{l=1}^{k} \lambda_l/V,\, V=\sum_{l=1}^{J}\lambda_l $.

\begin{figure}[h!]
%\includegraphics [angle=-90, scale=0.45]{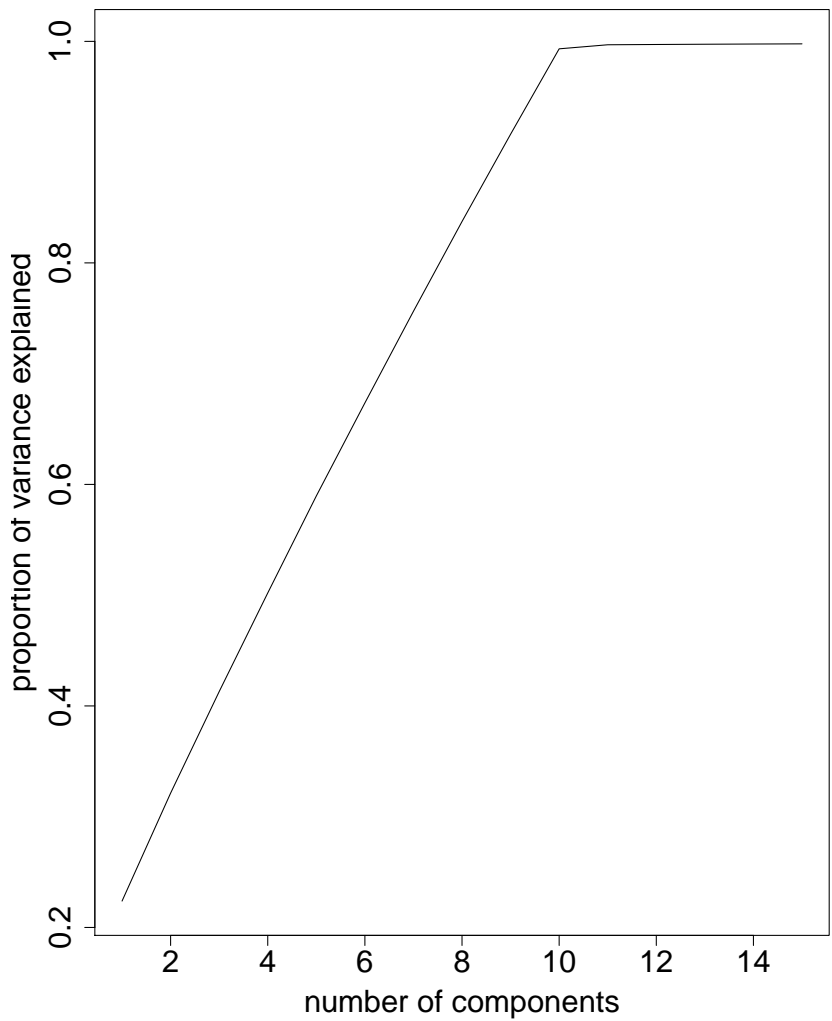}\par
%\centerline{\epsfig{file=d:/sinicas/simuh3.eps,angle=-90,width=4.5in}}\par
%\centerline{\epsfig{file=d:/sinicas/simuh4.eps,width=4.5in}}\par
\centerline{\epsfig{file=scree.pdf,width=4.5in}}\par  %<- modified by Ivan  (You need to put figure in the same folder; or you must specify the path to the figure.)
\caption{Selection of $p_n$ via the proportion of variance explained.}
\end{figure} 

%\begin{figure}[tbph]
%	\centering
%	\includegraphics[scale=.3]{scree.pdf}
%	\label{fig1}
%	\caption{Selection of $p_n$ via the proportion of variance explained}
%\end{figure}
%The following figure \ref{fig1} is an example of such a plot. For this plot  

We use the Newton Raphson algorithm to obtain the solution to equations \eqref{b_score} and \eqref{c_est5}. Note that these equations have multiple solutions. It is not clear which solution to choose unlike in maximization or minimization problems. To address this, we use the naive estimator as the initial value for the Newton Raphson algorithm. This leads to an accurate estimate if the naive estimate is close to the true value, in other words if the measurement error is small. This naive estimate, $\hat{\beta}^{naive}(\cdot) $ is obtained by the functional regression of $Y$ on $W(\cdot) $ as proposed by \cite{muller2005}. Let $\hat{\beta}^{naive}_k=\int \hat{\beta}^{naive}(t)\hat{\rho}_k(t)dt. $ We denote the basis functions formed by the eigenfunctions of the integral operator associated with the estimated covariance function $\hat{K}(\cdot,\cdot)$ as $\hat{\rho}_k(\cdot),\,k\geq1$. Then, $\hat{\beta}^{naive} =(\hat{\beta}^{naive}_1,...,\hat{\beta}^{naive}_{p_n}) $ is used as the initial value. The solution to this algorithm is the corrected estimate denoted by $ \hat{\beta}^{cor}=(\hat{\beta}^{cor}_1,...,\hat{\beta}^{cor}_{p_n}) $. The corrected estimate $\hat{\beta}^{cor}(\cdot)$ is $\hat{\beta}^{cor} (\cdot)=\sum\limits_{k=1}^{p_n} \hat{\rho}_k(\cdot) \hat{\beta}^{cor}_k $. The estimation errors are calculated as $ E_n=\int (\tilde{\beta}(t)-\hat{\beta}^{naive}(t)) ^2dt$ and  $ E_{co}=\int (\tilde{\beta}(t)-\hat{\beta}^{cor}(t)) ^2dt$. 

In the Gaussian case, we perform comparison with the PACE method proposed by \cite{yao2005functional} and the calibration based estimate proposed by \cite{chakraborty2017regression}. PACE was implemented using MATLAB packages. Covariance estimation for calibration method was implemented with the help of the code provided by the authors. The error corresponding to PACE and calibration estimates are denoted by $E_p$ and $E_{ca}$ respectively. Tables 1-3 report average errors based on $200$ repetitions. The values reported in the column $p_n$ denote the averages of the number of components selected.

\begin{table}[t!] %***
	\caption{Mean Error for Gaussian Response in Setting 1.}
	\label{table1}\par
	\vskip .2cm
	\centerline{\tabcolsep=4truept\begin{tabular}{|c|ccc|ccc|ccc|} \hline %***5truept
			\multicolumn{1}{|c}{Sample}
			& \multicolumn{3}{c}{$n=1000$}
			& \multicolumn{3}{c}{$n=3000$}
			& \multicolumn{3}{c|}{$n=6000$} \\ \hline
			\multicolumn{1}{|c}{$l$}
			& \multicolumn{1}{c}{$0.050$} & \multicolumn{1}{c}{$0.080$} & \multicolumn{1}{c}{$0.100$}
			& \multicolumn{1}{c}{$0.050$} & \multicolumn{1}{c}{$0.080$} & \multicolumn{1}{c}{$0.100$}
			& \multicolumn{1}{c}{$0.050$} & \multicolumn{1}{c}{$0.080$} & \multicolumn{1}{c|}{$0.100$} \\[3pt] \hline
		    $p_n$ &8 &8 &8 &10 &10 &10 &12 &12 &12\\
			  $E_n$  & 0.090 &0.158 & 0.207&0.085 & 0.160 & 0.202&0.084 &0.156&0.202 \\
		  	 \bf{$E_{co}$} &\textit{0.027} &\textit{0.052} &\textit{0.071} &\textit{0.024} &\textit{0.047} &\textit{0.060} &\textit{0.024} &\textit{0.045} & \textit{0.056}  \\
			 $E_p$ &1.600 & 8.140 &18.41 &0.991&6.861&2.333&0.656&1.573&2.956\\
			 $E_{ca}$ &0.093 &0.170 &0.220 &0.091 &0.171 &0.216 &0.092 &0.167 & 0.217 
			\\ \hline
			\end{tabular}}
\end{table}

\begin{table}[t!] %***
	\caption{Mean Error for Gaussian Response in Setting 2.}
	\label{table2}\par
	\vskip .2cm
	\centerline{\tabcolsep=2truept\begin{tabular}{|c|ccc|ccc|ccc|} \hline %***5truept
			\multicolumn{1}{|c}{Sample}
			& \multicolumn{3}{c}{$n=1000$}
			& \multicolumn{3}{c}{$n=3000$}
			& \multicolumn{3}{c|}{$n=6000$} \\ \hline
			\multicolumn{1}{|c}{$\sigma_2$}
			& \multicolumn{1}{c}{$1.00$} & \multicolumn{1}{c}{$2.00$} & \multicolumn{1}{c}{$3.00$}
			& \multicolumn{1}{c}{$1.00$} & \multicolumn{1}{c}{$2.00$} & \multicolumn{1}{c}{$3.00$}
			& \multicolumn{1}{c}{$1.00$} & \multicolumn{1}{c}{$2.00$} & \multicolumn{1}{c|}{$3.00$} \\[3pt] \hline
			$p_n$ &8 &8 &8 &10 &10 &10 &12 &12 &12\\
			$E_n$  & 0.006   &0.013 &0.033 &0.003  &0.010 &0.030 &0.003 &0.009  & 0.031  \\
			\textit{$E_{co}$} &\textit{0.004 } &\textit{0.004 } &\textit{0.006 } &\textit{0.001 } &\textit{0.002 } &\textit{0.002 } &\textit{0.001 } &\textit{0.001 } &\textit{0.001 }  \\
			$E_p$ &37.44  &31.95   &26.71  & 0.005 &2.477 &3.591 &0.005 &0.012  &0.031  \\
			$E_{ca}$ & 0.009  & 0.017 &0.040  &0.006 &0.014 &0.037  &0.005  &0.014  &0.036  
			\\ \hline
	\end{tabular}}
\end{table}

\begin{table}[t!] %***
	\caption{Mean Error for Binary Response in Setting 1.}
	\label{table3}\par
	\vskip .2cm
	\centerline{\tabcolsep=4truept\begin{tabular}{cccccccccc} \hline %***5truept
			\multicolumn{1}{c}{Sample}
			& \multicolumn{3}{c}{$n=1000$}
			& \multicolumn{3}{c}{$n=3000$}
			& \multicolumn{3}{c}{$n=6000$} \\ \hline
			\multicolumn{1}{c}{$l$}
			& \multicolumn{1}{c}{$0.050$} & \multicolumn{1}{c}{$0.080$} & \multicolumn{1}{c}{$0.100$}
			& \multicolumn{1}{c}{$0.050$} & \multicolumn{1}{c}{$0.080$} & \multicolumn{1}{c}{$0.100$}
			& \multicolumn{1}{c}{$0.050$} & \multicolumn{1}{c}{$0.080$} & \multicolumn{1}{c}{$0.100$} \\[3pt] \hline
			$E_n$  &1.209  & 1.151 &1.110 &1.587  &1.579  &1.573  &1.581 &1.575 &1.572  \\
			\bf{$E_{co}$} &\textit{0.153 } &\textit{0.151 } &\textit{0.167 } &\textit{0.082} &\textit{0.121} &\textit{0.120} &\textit{0.071 } & \textit{0.100 } &\textit{0.110} 	\\ \hline
	\end{tabular}}
\end{table}

\begin{table}[t!] %***
	\caption{Mean Error for Binary Response in Setting 2.}
	\label{table4}\par
	\vskip .2cm
	\centerline{\tabcolsep=4truept\begin{tabular}{cccccccccc} \hline %***5truept
			\multicolumn{1}{c}{Sample}
			& \multicolumn{3}{c}{$n=1000$}
			& \multicolumn{3}{c}{$n=3000$}
			& \multicolumn{3}{c}{$n=6000$} \\ \hline
			\multicolumn{1}{c}{$\sigma_2$}
			& \multicolumn{1}{c}{$1.00$} & \multicolumn{1}{c}{$2.00$} & \multicolumn{1}{c}{$3.00$}
			& \multicolumn{1}{c}{$1.00$} & \multicolumn{1}{c}{$2.00$} & \multicolumn{1}{c}{$3.00$}
			& \multicolumn{1}{c}{$1.00$} & \multicolumn{1}{c}{$2.00$} & \multicolumn{1}{c}{$3.00$} \\[3pt] \hline
			$E_n$  & 1.314   & 1.301  &1.230   &1.587    &1.592   &1.589  &1.587  &1.587   &1.583    \\
			\textit{$E_{co}$} &\textit{0.184 } &\textit{0.190 } &\textit{0.208 } &\textit{0.072 } &\textit{0.085 } &\textit{0.101} &\textit{0.040 } &\textit{0.061 } &\textit{ 0.070}  
			\\ \hline
	\end{tabular}}
\end{table}

%
%\begin{table}[tbph]	
%	\centering
%	\caption {Mean Errors for Binary Response}
%	\begin{tabular}{|ccccc|ccccc|}
%		\hline
%		\multicolumn{5}{|c|}{Setting1}	 &\multicolumn{5}{c|}{Setting2}\\
%		\hline
%		$l$   & $n$   & $E_n$     & $E_{co}$   & $p_n$ &	$\sigma_2$   & $n$   & $E_n$     & $E_{co}$   & $p_n$ \\
%		\hline
%		0.05 &1000	&0.076 &\textit{0.053}  &8 &1      &1000   & 0.031    &\textit{0.027}   &8     \\
%		0.05 &3000 &0.068 &\textit{0.042}  &10 &1     &3000   & 0.016    &\textit{0.011}   &10 \\
%		0.05 &6000 & 0.065	&	\textit{0.040} &12 &1     &6000   & 0.012    &\textit{0.007}   &12  \\
%		0.08 &1000	&	0.123	&\textit{0.090} &	8 &3    &1000   &0.062     &\textit{0.038}   &8 \\
%		0.08 &3000 &	0.116	&	\textit{0.081} &10 &3     &3000   &0.046     &\textit{0.023}   &10\\
%		0.08 &6000&	0.109	&\textit{0.077}&12 &3    &6000   &0.043     &\textit{0.020}   &12\\
%		0.10 &1000 &0.161& \textit{0.120}	&	8   &5    &1000   &0.102     & \textit{0.062}   &8 \\
%		0.10 &3000 &	0.142	&\textit{0.107}	&	10 &5 &3000   &0.088     &\textit{0.050}   &10\\ 
%		0.10 &6000 &0.131&	\textit{0.099}	&12 &5   &6000   & 0.090     &\textit{0.050}   &12 \\
%		\hline
%	\end{tabular}
%	\label{table3}
%\end{table}

From Tables 1--4, we observe that the error of the proposed corrected estimate, highlighted in italics, is lower than all the other alternatives. From the Table \ref{table1}, we observe that the error of the PACE estimator increases with $l$. PACE assumes that measurement error occurs at only discrete realizations of the function. The calibration estimator does allow the error to be functional in nature but requires the covariance structure to exist on a very small interval. That is, it imposes a banded structure on the covariance function of the error process. This explains the deteriorating performance of both these methods with an increasing value of $l$. The naive estimator ignores measurement error, and accordingly it's performance decreases with increasing $l$. A similar trend observed in the performance of the proposed corrected estimate, can be explained by the fact that the naive estimate is used as the initial value in the Newton Raphson algorithm. From Table \ref{table2}, we observe that the performance of all the methods deteriorate as the noise ($\sigma_2$) increases. Table \ref{table3} and \ref{table4} exhibit similar trends.

\subsection{Canadian weather data}
We now perform a simulation study based on the Canadian weather data.  This data consists of daily temperature measurements obtained from 35 Canadian weather stations for a period of one year. It also contains total annual rainfall, on a log scale at each of these stations. A sample of three curves from are displayed in Figure 2b. Denote the log annual  precipitation and curves obtained from smoothing the temperature data by $(Y_i,X_i(\cdot)),\, i=1,...,35$. Details on the smoothing method can be found in \cite{ramsay2006functional}. With the development in technologies and the low variability in the data, it is reasonable to assume that this data is free from measurement error. That is, the observed data is indeed the true covariate and error is added to it to obtain the contaminated covariate $W_i(\cdot)=X_i(\cdot)+U_i(\cdot)$, where $U_i(\cdot),\, i=1,...n$ are indepedent and identically distributed as a centered Gaussian Process with the squared exponential covariance function $K(s,t)=5\text{exp}(-(s-t)^2/(2*0.5^2))$. The slope function $\tilde{\beta}(\cdot)$ referred to as the true slope, is obtained via the regression model $Y_i=\int X(t)\beta(t)dt+\varepsilon_i$. It is assumed that the temperature has a linear effect on the log precipitation, there are no other covariates present in the model and that the slope function is indeed accurate. %The naive estimate $\hat{\tilde{\beta}}^{naive}()$ is obtained by the regression: $Y_i=\int W_i(t)\beta(t)dt+\varepsilon_i$. Finally, using the naive estimate as the initial value, the corrected estimate $\hat{\tilde{\beta}}^{cor}()$ is obtained.
From Figure 2a, we can see that the corrected estimate is closer to $ \tilde{\beta}(\cdot)$ than the naive one. The errors are $E_n=0.52,\, E_{co}=0.01$. Thus, in the presence of measurement error, the corrected estimate offers a marked improvement in the estimates. 

\begin{figure}[h!]
	%\includegraphics [angle=-90, scale=0.45]{scree.pdf}\par
	%\centerline{\epsfig{file=d:/sinicas/simuh3.eps,angle=-90,width=4.5in}}\par
	%\centerline{\epsfig{file=d:/sinicas/simuh4.eps,width=4.5in}}\par
	\centerline{\epsfig{file=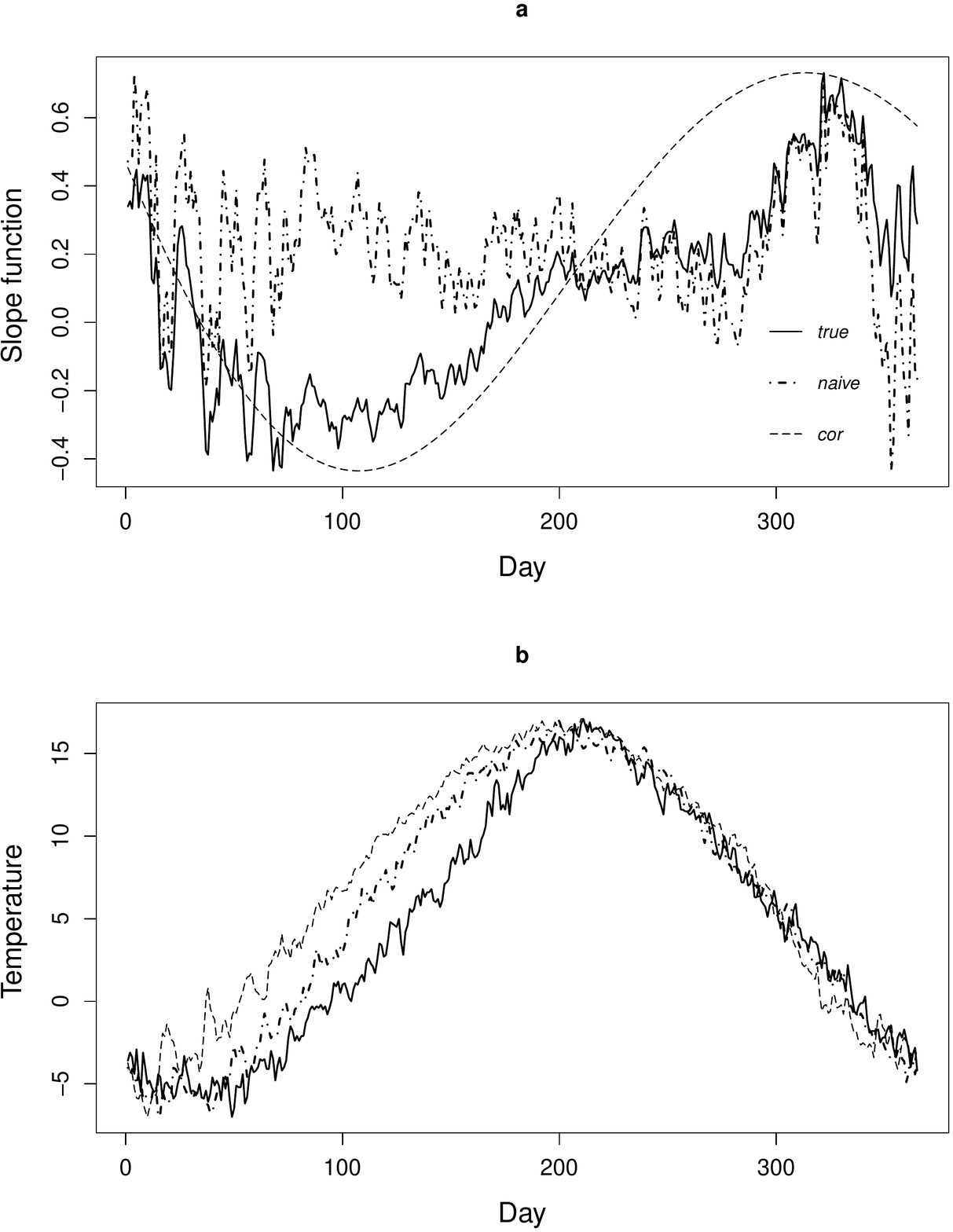,width=4.5in}}\par  %<- modified by Ivan  (You need to put figure in the same folder; or you must specify the path to the figure.)
	\caption{a: Plots of the true, naive and corrected slope function estimate. b: Plots of temperature at 3 of the stations.}
\end{figure}

\section{Conclusion}
We propose a mechanism to account for functional measurement error in functional regression models. Moreover, it is the first attempt in which the measurement error is functional and the response is binary. It allows us to relax several assumptions made on the covariance structure of measurement error in the presence of replicates. This can also serve as a guideline for collecting additional data when there is a possibility of data contamination. Simulations clearly indicate the competitive performance of our method over several alternatives, especially in the Gaussian case. 
There is a wide scope for future work in this area of measurement error in Functional Data Analysis. Most existing methods including the one proposed here, assume the classical linear measurement error model and involve parametric regression models. Models can be investigated that allow for different measurement error models particularly in the non-parametric regression framework.

%\vskip 14pt
%\noindent {\large\bf Supplementary Materials}
%
%Supplementary material contains results used to prove Theorem 1 and Theorem 2.
%\par
%%%%%%%%%%%%%%%%%%%%%%%%%%%%%%%%%%%%%%%%%%%%%%%%%%%%%%%%%%%%%%%%%%%%%%%%%%%%%%%%%%%%%%%%%%%%%%%%%%%%%%%%%%%%%%%%%%%%%%%%%%%%
%\vskip 14pt
%\noindent {\large\bf Acknowledgements}

%Write the acknowledgements here.

%%%%%%%%%%%%%%%%%%%%%%%%%%%%%%%%%%%%%%%%%%%%%%%%%%%%%%%%%%%%%%%%%%%%%%%%%%%%%%%%%%%%%%%%%%%%%%%%%%%%%%%%%%%%%%%%%%%%%%%%%%%%
%\markboth{\hfill{\footnotesize\rm Sneha Jadhav AND Shuangge Ma} \hfill}
%{\hfill {\footnotesize\rm Functional Measurement error in Functional Regression} \hfill}

%\iffalse
\bibhang=1.7pc
\bibsep=2pt
\fontsize{9}{14pt plus.8pt minus .6pt}\selectfont
\renewcommand\bibname{\large \bf References}
%\begin{thebibliography}{11}
\expandafter\ifx\csname
natexlab\endcsname\relax\def\natexlab#1{#1}\fi
\expandafter\ifx\csname url\endcsname\relax
  \def\url#1{\texttt{#1}}\fi
\expandafter\ifx\csname urlprefix\endcsname\relax\def\urlprefix{URL}\fi
%\fi

\section{Appendix }
\subsection{Appendix A }

\begin{proposition} \label{score}
	Let $\tilde{\beta}_c=(\tilde{\beta_0},...,\tilde{\beta}_{p_n})^{'}$. Then,
	$$\| U(\tilde{\beta}_c)\|=O_p(\sqrt{n}).$$
	
\end{proposition}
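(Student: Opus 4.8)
The plan is to exhibit $U(\tilde{\beta}_c)$ as a sum of $n$ independent, mean-zero random vectors and then control its second moment. Write $\psi_i=\Psi(Y_i,\mathbf{W}_i,\tilde{\theta})=[Y_i-F\{\delta_{ci}(\tilde{\beta}_c)'\tilde{\beta}_c-0.5\,\tilde{\beta}_c'\Omega_c\tilde{\beta}_c\}](\delta_{ci}(\tilde{\beta}_c)-\Omega_c\tilde{\beta}_c)$, so that $U(\tilde{\beta}_c)=\sum_{i=1}^n\psi_i$. First I would check that $E[\psi_i]=0$. Since $\delta_{ci}(\tilde{\beta}_c)=\mathbf{W}_{ci}+Y_i\Omega_c\tilde{\beta}_c=\mathbf{\Delta}_{ci}$, the factor $\delta_{ci}(\tilde{\beta}_c)-\Omega_c\tilde{\beta}_c$ is $\mathbf{\Delta}_i$-measurable, while by \eqref{cond} specialized to the binary case $F\{\delta_{ci}(\tilde{\beta}_c)'\tilde{\beta}_c-0.5\,\tilde{\beta}_c'\Omega_c\tilde{\beta}_c\}=P_{\tilde{\theta}}(Y_i=1\mid\mathbf{\Delta}_i)$; hence $E[\psi_i\mid\mathbf{\Delta}_i]=0$ and therefore $E[\psi_i]=0$. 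Because the $X_i(\cdot)$ are deterministic and the pairs $(U_i(\cdot),Y_i)$ are independent across $i$, the $\psi_i$ are independent, so $E\|U(\tilde{\beta}_c)\|^2=\sum_{i=1}^n E\|\psi_i\|^2$.

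Next I would bound $E\|\psi_i\|^2$ uniformly in $i$ and $n$. Since the response is binary, $|Y_i-F\{\cdot\}|\le 1$ and $(Y_i-1)^2\le 1$, so $\|\psi_i\|^2\le\|\mathbf{\Delta}_{ci}-\Omega_c\tilde{\beta}_c\|^2=\|\mathbf{W}_{ci}+(Y_i-1)\Omega_c\tilde{\beta}_c\|^2\le 2\|\mathbf{W}_{ci}\|^2+2\|\Omega_c\|^2\|\tilde{\beta}_c\|^2$. Assumption \ref{omega} bounds $\|\Omega_c\|=\|\Omega\|$, and Assumption \ref{beta} together with the fixed intercept bounds $\|\tilde{\beta}_c\|^2=\tilde{\beta}_0^2+\|\tilde{\beta}\|^2$. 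For the remaining term, $E\|\mathbf{W}_{ci}\|^2=1+\sum_{k=1}^{p_n}(X_{ik}^2+\lambda_k)$ because $W_{ik}=X_{ik}+U_{ik}$ with $X_{ik}$ deterministic and $U_{ik}\sim N(0,\lambda_k)$; by Parseval's identity and Assumption \ref{x}, $\sum_{k=1}^{p_n}X_{ik}^2\le\int X_i^2(t)\,dt\le m_2$, and $\sum_{k=1}^{p_n}\lambda_k\le\operatorname{tr}(\mathcal{K})<\infty$ since $\mathcal{K}$ is the covariance operator of an $L^2$ process. Hence $E\|\psi_i\|^2\le C$ for a constant $C$ independent of $i$ and $n$, giving $E\|U(\tilde{\beta}_c)\|^2\le Cn$.

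Finally, Markov's inequality gives $P(\|U(\tilde{\beta}_c)\|>M\sqrt{n})\le C/M^2$ for every $M>0$ and all $n$, so choosing $M$ large makes the right-hand side arbitrarily small, which is precisely $\|U(\tilde{\beta}_c)\|=O_p(\sqrt{n})$. The point that needs care is the uniformity of the bound on $E\|\psi_i\|^2$ as the dimension $p_n\to\infty$: the signal contribution is controlled because $\sum_k X_{ik}^2$ equals the bounded squared $L^2$ norm of $X_i$ regardless of $p_n$, and the noise contribution is controlled by summability of the eigenvalues $\lambda_k$ rather than by the cruder bound $p_n\lambda_{\max}(\Omega)$, which would only yield the weaker rate $O_p(\sqrt{np_n})$. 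A minor additional point is that the estimating function is actually built from mean-centered $\mathbf{W}_i$; since $\overline{X}=0$ is assumed without loss of generality the population centering is trivial, and replacing $\overline{\mathbf{W}}$ by $n^{-1}\sum_i\mathbf{W}_i$ contributes only a lower-order term that does not affect the $O_p(\sqrt{n})$ rate.
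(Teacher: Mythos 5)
Your proposal is correct and follows essentially the same route as the paper: the paper likewise expands $\|U(\tilde{\beta}_c)\|^2$ into a diagonal part and cross terms, kills the cross terms using the conditional mean-zero property $E[Y_i - E(Y_i\mid\mathbf{\Delta}_i)\mid\mathbf{\Delta}_i]=0$ together with independence across $i$, bounds the diagonal part via $\|\delta_{ci}-\Omega_c\tilde{\beta}_c\|^2 \leq c + \sum_l U_{il}^2$ with $\sum_l \lambda_l < \infty$ (exactly the trace-class summability you emphasize, rather than the crude $p_n\lambda_{\max}$ bound), and concludes by Markov's inequality. Your write-up is, if anything, slightly more explicit about why the summands are independent and mean zero and about the centering issue, but there is no substantive difference in approach.
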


\begin{proof}
	Let $\lambda_0=0$. Then,	
	\begin{align}
	U(\tilde{\beta}_c)&= \sum_{i=1}^{n} (\{Y_i-E(Y_i|\delta_{ci})\}(\delta_{ci}-\Omega_c\tilde{\beta}_c) \nonumber\\
	\|U(\tilde{\beta}_c)\|^2&=\sum_{l=0}^{p_n} \left\{\sum_{i=1}^{n} 
	(Y_i-E(Y_i|\delta_{ci}))(\delta_{ci}^{(l)}- \lambda_{l}\tilde{\beta}_{l})\right\}^2  \nonumber \\
	= & \sum_{l=0}^{p_n} \sum_{i=1}^{n} \left[ \{Y_i-E(Y_i|\delta_{ci})\}(\delta_{ci}^{(l)}- \lambda_{l}\tilde{\beta}_{l}) \right]^2  \nonumber\\
	& +\sum_{l=0}^{p_n} \sum_{i_1\neq i_2=1}^{n}\{Y_{i_1}-E(Y_{i_1}|\delta_{ci_1})\}(\delta_{ci_1}^{(l)}- \lambda_{l}\tilde{\beta}_{l}) (Y_{i_2}-E(Y_{i_2}|\delta_{ci_2}))(\delta_{ci_2}^{(l)}- \lambda_{l}\tilde{\beta}_{l})  \nonumber\\
	&=A_1+A_2, \nonumber\\
A_1&=   \sum_{l=0}^{p_n} \sum_{i=1}^{n} \left\{ (Y_i-E(Y_i|\delta_{ci}))(\delta_{ci}^{(l)}- \lambda_{l}\tilde{\beta}_{l}) \right\}^2  \nonumber \\
	A_1&= \sum_{i=1}^{n} (Y_i-E(Y_i|\delta_{ci}))^2\sum_{l=0}^{p_n}(\delta_{ci}^{(l)}- \lambda_{l}\tilde{\beta}_{l})^2  \nonumber\\
	A_1 &=\sum_{i=1}^{n} \{Y_i-E(Y_i|\delta_{ci})\}^2 \|\delta_{ci}-\Omega_c\tilde{\beta}_c\|^2 . \nonumber
	\end{align} 
	From Assumption \ref{beta},\, \ref{x},\,\ref{omega},
	\begin{align*} 
	\|\delta_{ci}-\Omega_c\tilde{\beta}_c\|^2 &= \|W_{ci}+Y_i\Omega_c \tilde{\beta}_c -  \Omega_c\tilde{\beta}_c\|^2 \\
	& \leq \|X_{ci}\|^2+ \|U_{ic}\|^2+ \|\Omega_c\|^2\|\tilde{\beta}_c\|^2  \leq c+\sum_{l=1}^{p_n} U_{il}^2.
	\end{align*}
	Recall that $u_{il} \sim N(0,\lambda_l)$. Thus,
	\begin{align*}
	E(A_1) & \leq c \sum_{i=1}^{n} E[\{Y_i-E(Y_i|\delta_{ci})\}^2]\sum_{l=1}^{p_n} EU_{il}^2\\
	&	\leq \sum_{i=1}^{n} E[\{Y_i-E(Y_i|\delta_{ci})\}^2]\sum_{l=1}^{p_n}\lambda_l =O(n). 
	\end{align*} 	 
	We now examine $A_2$ which is given as 	 
	\begin{align}	
	&\sum_{l=0}^{p_n} \sum_{i_1\neq i_2=1}^{n}\{Y_{i_1}-E(Y_{i_1}|\delta_{ci_1})\}(\delta_{ci_1}^{(l)}- \lambda_{l}\beta_{l}) (Y_{i_2}-E(Y_{i_2}|\delta_{ci_2}))(\delta_{ci_2}^{(l)}-\lambda_{l}\beta_{l})  \nonumber\\
	&=\sum_{i_1\neq i_2=1}^{n} \{Y_{i_1}-E(Y_{i_1}|\delta_{ci_1})\}(Y_{i_2}-E(Y_{i_2}|\delta_{ci_2})) \sum_{l=0}^{p_n} (\delta_{ci_1}^{(l)}- \lambda_{l}\beta_{l})(\delta_{ci_2}^{(l)}- \lambda_{l}\beta_{l}).  \nonumber 	\end{align}
	Thus, we obtain $E(A_2) = 0$ and $E(\|U(\beta_c)\|^2)=O(n)$ implying $ \|U(\beta_c)\|=O_p(\sqrt{n}). $	
	
\end{proof}

\begin{proposition} \label{jacobian}
	Let $J(\beta_c)=\dfrac{\partial U(\beta_c)}{\partial \beta_c}$ and $t_i(\beta_c)=W_{ci}^{'}\beta_c+(Y_i-0.5)\beta_c^{'}\Omega\beta_c$. Then 
	\begin{align*}
	J(\beta_c)&=-\sum_{i=1}^{n}  F(t_i(\beta_c))(Y_i-1)\Omega_c-\sum_{i=1}^{n}  F(t_i(\beta_c))(1-F(t_i(\beta_c)))(W_{ic}W_{ic}^{'}+(Y_i-1)\Omega_c\beta_cW_{ic}^{'})\\
	&\quad-\sum_{i=1}^{n}  F(t_i(\beta_c))(1-F(t_i(\beta_c)))(2(Y_i-0.5)W_{ic}(\Omega_c\beta_c)^{'}
	+2(Y_i-1)(Y_i-0.5)\Omega_c\beta_c(\Omega_c\beta_c)^{'})
	\end{align*} 
	
\end{proposition}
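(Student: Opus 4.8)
The plan is to differentiate $U(\beta_c)$ from \eqref{b_score} term by term in $\beta_c$, treating this as ordinary finite-dimensional vector calculus since $p_n$ is fixed for a given $n$. First I would rewrite the $i$-th summand so that the two occurrences of $\beta_c$ are transparent. Substituting $\delta_{ci}(\beta_c)=W_{ci}+Y_i\Omega_c\beta_c$ into the argument of $F$ gives $\delta_{ci}(\beta_c)'\beta_c-0.5\beta_c'\Omega_c\beta_c = W_{ci}'\beta_c+(Y_i-0.5)\beta_c'\Omega_c\beta_c = t_i(\beta_c)$, where I use that $\beta_c'\Omega_c\beta_c=\beta'\Omega\beta$ because the bordering row and column of $\Omega_c$ vanish; similarly $\delta_{ci}(\beta_c)-\Omega_c\beta_c = W_{ic}+(Y_i-1)\Omega_c\beta_c =: v_i(\beta_c)$. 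Thus each summand is the scalar $g_i(\beta_c):=Y_i-F(t_i(\beta_c))$ times the vector $v_i(\beta_c)$.

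Next I would apply the product rule $\partial(g_i v_i)/\partial\beta_c = v_i\,(\partial g_i/\partial\beta_c)+g_i\,(\partial v_i/\partial\beta_c)$, where $\partial g_i/\partial\beta_c$ is a row vector and $\partial v_i/\partial\beta_c$ is a matrix. For the first piece, the logistic identity $F'=F(1-F)$ together with $\partial t_i/\partial\beta_c = W_{ic}'+2(Y_i-0.5)(\Omega_c\beta_c)'$ (using symmetry of $\Omega_c$) yields $\partial g_i/\partial\beta_c = -F(t_i(\beta_c))(1-F(t_i(\beta_c)))\{W_{ic}'+2(Y_i-0.5)(\Omega_c\beta_c)'\}$; left-multiplying by $v_i(\beta_c)=W_{ic}+(Y_i-1)\Omega_c\beta_c$ and expanding the outer product produces precisely the four rank-one terms $W_{ic}W_{ic}'$, $(Y_i-1)\Omega_c\beta_c W_{ic}'$, $2(Y_i-0.5)W_{ic}(\Omega_c\beta_c)'$, $2(Y_i-1)(Y_i-0.5)\Omega_c\beta_c(\Omega_c\beta_c)'$, which are exactly the contents of the second and third displayed sums in the statement. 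For the second piece, $\partial v_i/\partial\beta_c = (Y_i-1)\Omega_c$, so $g_i\,(\partial v_i/\partial\beta_c) = \{Y_i-F(t_i(\beta_c))\}(Y_i-1)\Omega_c$.

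The only point requiring care — and really the sole simplification — is this last term: since $Y_i\in\{0,1\}$ we have $Y_i(Y_i-1)=0$, hence $\{Y_i-F(t_i(\beta_c))\}(Y_i-1)\Omega_c = -F(t_i(\beta_c))(Y_i-1)\Omega_c$, which is the first sum in the statement. Summing the three contributions over $i=1,\dots,n$ gives the claimed expression for $J(\beta_c)$. I do not anticipate any genuine obstacle; the main thing to keep straight is the row-versus-column-vector bookkeeping in the derivatives so that the transposes in the rank-one terms appear in the stated order (e.g. $W_{ic}(\Omega_c\beta_c)'$ versus $\Omega_c\beta_c W_{ic}'$), together with the binary identity $Y_i^2=Y_i$.
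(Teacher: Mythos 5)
Your proposal is correct and follows essentially the same route as the paper: substitute $\delta_{ci}(\beta_c)=W_{ci}+Y_i\Omega_c\beta_c$ to express the $i$-th summand as $\{Y_i-F(t_i(\beta_c))\}\{W_{ic}+(Y_i-1)\Omega_c\beta_c\}$, then differentiate with the product rule and $F'=F(1-F)$. The only cosmetic difference is that you make explicit the binary identity $Y_i(Y_i-1)=0$ that kills the extra $\Omega_c$ term, which the paper uses silently after first expanding $Y_i\,\delta_{ci}$; this is a welcome clarification rather than a departure.
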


\begin{proof}
	\begin{align*}
	U(\beta_c)&= \sum_{i=1}^{n}(Y_i-F(\delta_{ci}^{'}\beta_c-0.5\beta_c^{'}\Omega_c\beta_c))\begin{bmatrix}
	\delta_{ci}-\Omega_c\beta_c\end{bmatrix} \\
	&= \sum_{i=1}^{n}(Y_i-F(W_{ci}^{'}\beta_c+(Y_i-0.5)\beta_c^{'}\Omega_c\beta_c))\begin{bmatrix}
	W_{ci}+(Y_i-1)\Omega_c\beta_c\end{bmatrix}\\
	&= \sum_{i=1}^{n}Y_i	W_{ci} +Y_i(Y_i-1)\Omega_c\beta_c- F(W_{ci}^{'}\beta_c+(Y_i-0.5)\beta_c^{'}\Omega_c\beta_c)\begin{bmatrix}
	W_{ci}+(Y_i-1)\Omega_c\beta_c\end{bmatrix}
	\end{align*}
	Taking derivatives
	\begin{align*}
	\dfrac{\partial U(\beta_c) }{\partial \beta_c} &= -\sum_{i=1}^{n} F(t_i(\beta_c))(Y_i-1)\Omega_c-\sum_{i=1}^{n}  F(t_i(\beta_c))(1-F(t_i(\beta_c)))(W_{ic}W_{ic}^{'}+(Y_i-1)\Omega_c\beta_cW_{ic}^{'})\\
	&\quad -\sum_{i=1}^{n}  F(t_i(\beta_c))(1-F(t_i(\beta_c)))(2(Y_i-0.5)W_{ic}(\Omega_c\beta_c)^{'}+
	2(Y_i-1)(Y_i-0.5)\Omega_c\beta_c(\Omega_c\beta_c)^{'})
	\end{align*}	
\end{proof}

\begin{proposition} \label{prop3}
	$\underset{\| \beta_c-\tilde{\beta}_c\|= \zeta \sqrt{p_n/n}}{\sup}
	\underset{\|b\|=1}{\sup} | b^{'} (J(\beta_c)-J(\tilde{\beta}_c ))b | \leq \sqrt{n} p_n$
	
\end{proposition}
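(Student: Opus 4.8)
The plan is to start from the closed form of the Jacobian in Proposition~\ref{jacobian} and bound the quadratic form term by term. Write $J(\beta_c)=\sum_{i=1}^{n}J_i(\beta_c)$, where each $J_i(\beta_c)$ is a finite sum of matrices of the shape $g(t_i(\beta_c))\,M(\beta_c,Y_i)$: here $g$ is either $F$ or $F(1-F)$, evaluated at $t_i(\beta_c)=W_{ci}'\beta_c+(Y_i-0.5)\beta_c'\Omega_c\beta_c$, and $M(\beta_c,Y_i)$ is one of the rank-one matrices $\Omega_c$, $W_{ci}W_{ci}'$, $\Omega_c\beta_c W_{ci}'$, $W_{ci}(\Omega_c\beta_c)'$, $\Omega_c\beta_c(\Omega_c\beta_c)'$ multiplied by a scalar polynomial in $Y_i$ that is bounded since $Y_i\in\{0,1\}$. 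Abbreviating $g_i=g(t_i(\beta_c))$, $\tilde g_i=g(t_i(\tilde{\beta}_c))$, $M_i=M(\beta_c,Y_i)$, $\tilde M_i=M(\tilde{\beta}_c,Y_i)$, I would telescope each term as $g_iM_i-\tilde g_i\tilde M_i=(g_i-\tilde g_i)M_i+\tilde g_i(M_i-\tilde M_i)$, so that the perturbation of the scalar link factor and that of the polynomial matrix factor are handled separately.

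For the scalar factor, $F$ and $F(1-F)$ both have derivative bounded by $1/4$, hence $|g_i-\tilde g_i|\le\tfrac14|t_i(\beta_c)-t_i(\tilde{\beta}_c)|$. Writing $t_i(\beta_c)-t_i(\tilde{\beta}_c)=W_{ci}'(\beta_c-\tilde{\beta}_c)+(Y_i-0.5)(\beta_c-\tilde{\beta}_c)'\Omega_c(\beta_c+\tilde{\beta}_c)$ and using Cauchy--Schwarz, Assumptions~\ref{beta} and \ref{omega}, $|Y_i|\le1$, and the uniform bound $\|\beta_c\|\le\|\tilde{\beta}_c\|+\zeta\sqrt{p_n/n}\le c$ on the sphere (valid since $p_n/n\to0$ by Assumption~\ref{rateb}) gives $|t_i(\beta_c)-t_i(\tilde{\beta}_c)|\le c\,\|\beta_c-\tilde{\beta}_c\|\,(\|W_{ci}\|+1)$. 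For the matrix factor, every piece of $M$ other than the $W_{ci}$'s has uniformly bounded norm ($\|\Omega_c\|<\infty$ and $\|\Omega_c\beta_c\|\le\|\Omega\|\,\|\beta_c\|\le c$), and replacing $\beta_c$ by $\tilde{\beta}_c$ in $M$ costs a factor $c\,\|\beta_c-\tilde{\beta}_c\|$ (e.g. $\|\Omega_c\beta_c(\Omega_c\beta_c)'-\Omega_c\tilde{\beta}_c(\Omega_c\tilde{\beta}_c)'\|\le c\,\|\beta_c-\tilde{\beta}_c\|$). Testing against $\|b\|=1$ with $|b'uv'b|\le\|u\|\,\|v\|$, every term of $J_i(\beta_c)-J_i(\tilde{\beta}_c)$ is then at most $c\,\|\beta_c-\tilde{\beta}_c\|\,(1+\|W_{ci}\|+\|W_{ci}\|^2+\|W_{ci}\|^3)$; since all of these bounds depend on $\beta_c$ only through the fixed radius $\|\beta_c-\tilde{\beta}_c\|=\zeta\sqrt{p_n/n}$, they hold uniformly over both suprema.

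Summing over $i$ reduces the claim to $\sum_{i=1}^{n}\|W_{ci}\|^{k}=O_p(n)$ for $k\le3$. Because $W_{ik}=X_{ik}+U_{ik}$ with $\sum_{k}X_{ik}^{2}\le m_2$ (Assumption~\ref{x} and Parseval) and the $U_{ik}$ independent, $U_{ik}\sim N(0,\lambda_k)$, with $\sum_k\lambda_k<\infty$ (the covariance operator $\mathcal{K}$ is trace class; this summability is already used in the proof of Proposition~\ref{score}), the moments $E\|W_{ci}\|^{k}$ for $k\le4$ are bounded uniformly in $i$ and in $p_n$, and Markov's inequality gives $\sum_{i=1}^{n}\|W_{ci}\|^{k}=O_p(n)$. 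Consequently $\sup_{\|\beta_c-\tilde{\beta}_c\|=\zeta\sqrt{p_n/n}}\sup_{\|b\|=1}|b'(J(\beta_c)-J(\tilde{\beta}_c))b|=O_p(n\,\|\beta_c-\tilde{\beta}_c\|)=O_p(\zeta\sqrt{np_n})$, which is at most $\sqrt{n}\,p_n$ for $p_n\ge1$, the asserted bound.

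The Lipschitz estimate for the link and the linear/quadratic differencing of the polynomial factors are routine. The step I expect to be the real obstacle is making the moment control of $\|W_{ci}\|$ uniform in the diverging dimension $p_n$, which is precisely where the summability $\sum_k\lambda_k<\infty$ of the error eigenvalues enters; a secondary, purely bookkeeping, matter is verifying that every constant above is independent of $\beta_c$ on the sphere, which is immediate here since $\|\beta_c\|$ and $\|\beta_c-\tilde{\beta}_c\|$ are both controlled there.
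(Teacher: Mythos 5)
Your proposal is correct and follows essentially the same route as the paper: both start from the explicit form of $J(\beta_c)$ in Proposition~\ref{jacobian}, difference it term by term using the Lipschitz continuity of $F$ and $F(1-F)$ applied to $t_i(\beta_c)-t_i(\tilde{\beta}_c)$, and close the argument with moment bounds on $\|W_{ci}\|$ coming from Assumptions~\ref{beta}--\ref{omega} and the summability of the $\lambda_l$. Your treatment is somewhat more systematic where the paper writes ``similarly we can show that the other involved terms are of order $\sqrt{n}p_n$,'' and your final rate $O_p(\zeta\sqrt{np_n})$ is consistent with (indeed slightly sharper than) the stated bound $\sqrt{n}\,p_n$.
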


\begin{proof}
	Consider,
	\begin{align*}
	|b^{'}  \sum_{i=1}^{n} (F(t_i(\beta_c))-F(t_i(\tilde{\beta}_c)))(y_i-1)\Omega_c b |\leq \sum_{i=1}^{n} |(F(t_i(\beta_c))-F(t_i(\tilde{\beta}_c))) ||b^{'} \Omega_c b|
	\end{align*}	
	Using the fact that $F(\cdot)$ is a bounded continuous function with bounded derivatives along with Taylors integral remainder theorem we obtain
	
	$$| F(t_i(\beta_c))-F(t_i(\tilde{\beta}_c)) |< c |t_i(\beta_c)-t_i(\tilde{\beta}_c)| \leq \|W_{ci}\|\|\beta_c-\tilde{\beta}_c\|+0.5\|\beta_c^{'}\Omega_c \beta_c^{'}-\tilde{\beta}_c^{'}\Omega_c\tilde{\beta}_c^{'} \|.$$
	Thus from Assumptions \ref{beta},\,\ref{x} and \ref{omega},
	
	\begin{align}
	& |	b^{'}  \sum_{i=1}^{n} (F(t_i(\beta_c))-F(t_i(\tilde{\beta}_c)))(y_i-1)\Omega_c b | \nonumber \\
	&\leq  \sum_{i=1}^{n} \left(\|W_{ci}\|\|\beta_c-\tilde{\beta}_c\|+0.5\|\beta_c^{'}\Omega_c \beta_c^{'}-\tilde{\beta}_c^{'}\Omega_c\tilde{\beta}_c^{'} \| \right) |b^{'}\Omega_c b| \nonumber\\	
	& \leq |b^{'}\Omega_c b| \|\beta_c-\tilde{\beta}_c\| \sum_{i=1}^{n} \left( \|W_{ci}\|+ c \|\Omega\| \|\tilde{\beta}_c \| \right)\nonumber . 
	\end{align}
	Taking supremum we obtain,
	\begin{align}
	&	\underset{\| \beta_c-\tilde{\beta}_c\|= \zeta \sqrt{p_n/n}}{\sup}
	\underset{\|b\|=1}{\sup}b^{'} | \sum_{i=1}^{n} (F(t_i(\beta_c))-F(t_i(\tilde{\beta}_c)))(y_i-1)\Omega_c b| \leq O_p(\sqrt{n}p_n) .\label{prop2_1}
	\end{align}
	Consider,
	$$ |\sum_{i=1}^{n} ( F(t_i(\beta_c))(1-F(t_i(\beta_c)))-F(t_i(\tilde{\beta}_c))(1-F(t_i(\tilde{\beta}_c))))b^{'}W_{ic}W_{ic}^{'} b | $$
	By using the boundedness of $ F(t_i(\beta_c))(1-F(t_i(\beta_c)))$ we obtain
	\begin{align*}
	&|\sum_{i=1}^{n} \left( F(t_i(\beta_c))(1-F(t_i(\beta_c)))-F(t_i(\tilde{\beta}_c))(1-F(t_i(\tilde{\beta}_c)))\right)b^{'}W_{ic}W_{ic}^{'} b|^2 \\
	&= n^2 \left(\sum_{i=1}^{n}\dfrac{  F(t_i(\beta_c))(1-F(t_i(\beta_c)))-F(t_i(\tilde{\beta}_c))(1-F(t_i(\tilde{\beta}_c))) }{\sqrt{n}} \dfrac{b^{'}X_{ic}X_{ic}^{'} b+b^{'}U_{ic}U_{ic}^{'} b}{\sqrt{n}}\right)^2\\
	& \leq 4cn^2 \sum_{i=1}^{n}\dfrac{ (\|W_{ci}\|^2\|\beta_c-\tilde{\beta}_c\|^2+0.5\|\beta_c^{'}\Omega_c \beta_c^{'}-\tilde{\beta}_c^{'}\Omega_c\tilde{\beta}_c^{'} \|^2) }{n}  \sum_{i=1}^{n}\dfrac{(b^{'}X_{ic}X_{ic}^{'} b)^2+(b^{'}U_{ic}U_{ic}^{'} b)^2}{n}\\
	&\leq 4cn^2 \sum_{i=1}^{n}\dfrac{ (\|W_{ci}\|^2\|\beta_c-\tilde{\beta}_c\|^2+0.5\|\beta_c^{'}\Omega_c \beta_c^{'}-\tilde{\beta}_c^{'}\Omega_c\tilde{\beta}_c^{'} \|^2) }{n}   \left(\sum_{i=1}^{n}\dfrac{(b^{'}X_{ic}X_{ic}^{'} b)^2}{n}+E(b^{'}U_{1c}U_{1c}^{'} b)^2 \right)\\
	&+\text{smaller order terms} \\
	\end{align*}
	Using Assumption \ref{omega}, 
	\begin{align*}
	&	E(b^{'}u_{1c}u_{1c}^{'} b)^2=E(\|b^{'}U_{1c}\|^4) \leq \|b\|^4 E(\|U_{1c}\|^4)\\
	&E(\|U_{1c}\|^4)	= E(\sum_{l=1}^{p_n} U_{1cl}^2)^2= \sum_{l=1}^{p_n} EU_{1cl}^4+\sum_{l=1}^{p_n} \lambda_{l}\sum_{l=1}^{p_n} \lambda_{l} =O(p_n)
	\end{align*}
	Thus,
	
	\begin{align*}
	&	|\sum_{i=1}^{n} \left( F(t_i(\beta_c))(1-F(t_i(\beta_c)))-F(t_i(\tilde{\beta}_c))(1-F(t_i(\tilde{\beta}_c)))\right)b^{'}W_{ic}W_{ic}^{'} b|^2 \\
	%& \leq\|\beta_c-\tilde{\beta}_c\|^2 n^2 \left( 8c \sum_{i=1}^{n}\dfrac{ \| x_{ic}\|^2+\|u_{ic}\|^2}{n}+8c\sum_{i=1}^{n} \| \Omega_c\|^2 \|\tilde{\beta}_c\|^2 \right)O_p(p_n)\\
	&\leq\|\beta_c-\tilde{\beta}_c\|^2 n^2 O_p(p_n)
	\end{align*}
	Taking supremum,
	\begin{align*}
	&	\underset{\| \beta_c-\tilde{\beta}_c\|= \zeta \sqrt{p_n/n}}{\sup}
	\underset{\|b\|=1}{\sup}b^{'}  	|\sum_{i=1}^{n} \left( F(t_i(\beta_c))(1-F(t_i(\beta_c)))-F(t_i(\tilde{\beta}_c))(1-F(t_i(\tilde{\beta}_c)))\right)b^{'}W_{ic}W_{ic}^{'} b| \\
	&  \leq   \zeta O_p(\dfrac{\sqrt{p_n}}{\sqrt{n}}n \sqrt{p_n})  =O_p(\sqrt{n}p_n).
	\end{align*}

	Similarly we can show that the other involved terms are of order $\sqrt{n}p_n$ thus yielding the result.
\end{proof}

\begin{proposition}\label{prop4}
	$$\underset{\| \beta_c-\tilde{\beta}_c\|= \zeta \sqrt{p_n/n}}{sup} (\beta_c-\tilde{\beta}_c)^{'}J(\tilde{\beta}_c)(\beta_c-\tilde{\beta}_c)<-c\zeta^2 p_n+ \zeta^2o(p_n) $$
\end{proposition}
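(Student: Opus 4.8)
The strategy is to take the explicit formula for $J(\beta_c)$ from Proposition~\ref{jacobian}, evaluate it at $\beta_c=\tilde\beta_c$, peel off the one term that is manifestly negative definite, and argue that Assumption~\ref{eigen} is precisely the statement that this term dominates the rest. Put $b=\beta_c-\tilde\beta_c$, so on the relevant set $\|b\|^2=\zeta^2 p_n/n$, and write $F_i=F\big(t_i(\tilde\beta_c)\big)$ with $t_i(\tilde\beta_c)=W_{ci}'\tilde\beta_c+(Y_i-0.5)\tilde\beta_c'\Omega_c\tilde\beta_c$. Sandwiching the formula of Proposition~\ref{jacobian} at $\beta_c=\tilde\beta_c$ by $b$ gives
\[
b'J(\tilde\beta_c)b=-\sum_{i=1}^n F_i(1-F_i)(b'W_{ic})^2-R(b),
\]
where $R(b)$ collects the term $\sum_i F_i(Y_i-1)\,b'\Omega_c b$ together with the three bilinear pieces carrying a factor $\Omega_c\tilde\beta_c$, namely $(Y_i-1)(b'\Omega_c\tilde\beta_c)(W_{ic}'b)$, $(Y_i-0.5)(b'W_{ic})\big((\Omega_c\tilde\beta_c)'b\big)$ and $(Y_i-1)(Y_i-0.5)\big((\Omega_c\tilde\beta_c)'b\big)^2$, each with its $F_i$ or $F_i(1-F_i)$ weight.

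For the leading term I would bound
\[
\sum_{i=1}^n F_i(1-F_i)(b'W_{ic})^2\ \ge\ \Big(\min_{1\le i\le n}F_i(1-F_i)\Big)\,b'\Big(\textstyle\sum_i W_{ic}W_{ic}'\Big)b\ \ge\ \Big(\min_i F_i(1-F_i)\Big)\,nB\,\|b\|^2=\Big(\min_i F_i(1-F_i)\Big)B\,\zeta^2 p_n,
\]
with $B=\lambda_{\min}\big(n^{-1}\sum_i W_{ic}W_{ic}'\big)$ as in Assumption~\ref{eigen}. I would then lower bound $\min_i F_i(1-F_i)$ by an explicit sample quantity: since $u\mapsto e^{u}/(1+e^{u})^2$ is unimodal and $|t_i(\tilde\beta_c)-W_{ci}'\tilde\beta_c|\le\tfrac12\lambda_{\max}(\Omega)m_1^2$ by Assumptions~\ref{beta}--\ref{omega}, a direct computation would yield, up to a universal constant, $\min_i F_i(1-F_i)\ge \exp\!\big(\inf_i W_{ic}'\tilde\beta-\sup_i W_{ic}'\tilde\beta\big)/\big(1+\exp(\inf_i W_{ic}'\tilde\beta)\big)$, which is exactly the quantity on the right of Assumption~\ref{eigen}.

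The remainder $R(b)$ would be handled termwise by Cauchy--Schwarz, using $0\le F_i(1-F_i)\le\tfrac14$, $|Y_i|\le1$, $b'\Omega_c b\le\lambda_{\max}(\Omega)\|b\|^2$, $\|\Omega_c\tilde\beta_c\|\le\lambda_{\max}(\Omega)m_1$ (Assumptions~\ref{beta}, \ref{omega}), together with $n^{-1}\sum_i\|W_{ic}\|^2=O_p(1)$ and $n^{-1}\sum_i(b'W_{ic})^2\le\lambda_{\max}\big(n^{-1}\sum_i W_{ic}W_{ic}'\big)\|b\|^2$, the $U_{ic}$ contributions being treated exactly as in Proposition~\ref{prop3} (where $E\|U_{ic}\|^2=\sum_{l\le p_n}\lambda_l=O(1)$). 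This would give $|R(b)|\le C\,\lambda_{\max}(\Omega)e^{\lambda_{\max}(\Omega)m_1}\,n\|b\|^2(1+o_p(1))=C\,\lambda_{\max}(\Omega)e^{\lambda_{\max}(\Omega)m_1}\,\zeta^2 p_n(1+o_p(1))$ for an absolute constant $C$. Combining the two estimates would then yield
\[
b'J(\tilde\beta_c)b\ \le\ -\Big(B\min_i F_i(1-F_i)-C\lambda_{\max}(\Omega)e^{\lambda_{\max}(\Omega)m_1}\Big)\zeta^2 p_n+\zeta^2 o_p(p_n),
\]
and Assumption~\ref{eigen} forces the bracket to stay bounded away from $0$, giving the claim, with the fluctuations of $B$, of $n^{-1}\sum_i\|W_{ic}\|^2$ and of the $U_{ic}$-averages about their limits absorbed into $\zeta^2 o(p_n)$.

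The main obstacle I anticipate is the constant-chasing in the last step: one must verify that the crude Cauchy--Schwarz bounds on the several terms of $R(b)$ genuinely fit under $C\lambda_{\max}(\Omega)e^{\lambda_{\max}(\Omega)m_1}$, rather than requiring an extra factor such as $\sqrt{\lambda_{\max}(n^{-1}\sum W_{ic}W_{ic}')}$ or an $m_2$-dependent constant, so that Assumption~\ref{eigen} applies verbatim, and that the random linear functionals $W_{ic}'\tilde\beta$ entering the lower bound on $\min_i F_i(1-F_i)$ through $\inf_i$ and $\sup_i$ are handled consistently with the a.s.\ form of that assumption. Everything else --- the $W_{ic}W_{ic}'$ versus $X_{ic}X_{ic}'+U_{ic}U_{ic}'$ cross terms and the replacement of sample averages by their deterministic limits --- contributes only to the $\zeta^2 o(p_n)$ term and is dispatched as in the proof of Proposition~\ref{prop3}.
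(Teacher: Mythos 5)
Your decomposition is essentially the paper's --- both arguments isolate $-\sum_i F_i(1-F_i)(b'W_{ic})^2$ as the dominating negative term, lower-bound it through $B=\lambda_{\min}\bigl(n^{-1}\sum_i W_{ic}W_{ic}'\bigr)$ and $\min_i F_i(1-F_i)$, and invoke Assumption \ref{eigen} to beat the $\Omega_c$ contribution --- but the way you dispose of the remaining terms creates a genuine gap, and it is exactly the one you flag yourself. Assumption \ref{eigen} is calibrated to dominate \emph{one} order-$\zeta^2p_n$ positive term, namely $\sum_i F_i(1-Y_i)\,b'\Omega_c b\le \sup_iF_i\,(1-\inf_iF(X_{ic}'\tilde\beta_c))\,\lambda_{\max}(\Omega)\,n\|b\|^2$, with its precise constant; the chain of inequalities in the paper's proof consumes the entire $e^{\lambda_{\max}(\Omega)m_1}$ factor just to convert $\exp(\inf_iW_{ic}'\tilde\beta-\sup_iW_{ic}'\tilde\beta)/(1+\exp(\inf_iW_{ic}'\tilde\beta))$ into the endpoint bound $e^{t_2}/\{(1+e^{t_2})(1+e^{t_1})\}$ on $\min_iF_i(1-F_i)$, where $t_1,t_2$ are the sup/inf of $t_i(\tilde\beta_c)$ shifted by $\pm\tfrac12\tilde\beta_c'\Omega\tilde\beta_c$. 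There is no leftover cushion for a multiplicative absolute constant $C>1$. Your Cauchy--Schwarz treatment of the two cross terms $(Y_i-1)(b'\Omega_c\tilde\beta_c)(W_{ic}'b)$ and $(Y_i-0.5)(b'W_{ic})((\Omega_c\tilde\beta_c)'b)$ yields contributions of \emph{exact} order $n\|b\|^2=\zeta^2p_n$ with constants of the form $m_1\lambda_{\max}(\Omega)\sqrt{\lambda_{\max}(n^{-1}\sum_iW_{ic}W_{ic}')}$, which is precisely the ``extra factor'' you worry about; Assumption \ref{eigen} cannot absorb it, so your final bracket $B\min_iF_i(1-F_i)-C\lambda_{\max}(\Omega)e^{\lambda_{\max}(\Omega)m_1}$ is not guaranteed positive and the proof does not close.

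The paper avoids this in two ways that your remainder bound forgoes. First, the rank-one term $-\sum_iF_i(1-F_i)\,2(Y_i-1)(Y_i-0.5)\bigl((\Omega_c\tilde\beta_c)'b\bigr)^2$ is simply nonpositive for binary $Y_i\in\{0,1\}$, since $(Y_i-1)(Y_i-0.5)\ge0$, and can be discarded rather than charged to the remainder at cost $(\lambda_{\max}(\Omega)m_1)^2n\|b\|^2$. Second, the two cross terms are argued to be $\zeta^2o_p(p_n)$, not $O(\zeta^2p_n)$, by exploiting the centering $\bar X=0$ (so that $n^{-1}\sum_iW_{ic}\to0$): the cross terms are linear, not quadratic, in $W_{ic}$, so after pulling out the bounded weights they ride on the vanishing sample mean of $W_{ic}$ rather than on $\lambda_{\max}$ of the second-moment matrix. (That step in the paper is itself somewhat loosely written, since the weights $F_i(1-F_i)(1-Y_i)$ vary with $i$, but it is the intended mechanism.) To repair your argument you would either need to reproduce this $o_p$ treatment of the cross terms --- in which case you recover the paper's proof --- or strengthen Assumption \ref{eigen} to accommodate your constant $C$ and the $\sqrt{\lambda_{\max}}$ factor, which changes the statement being proved under the stated hypotheses.
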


\begin{proof}
	Let $a_c=\beta_c-\tilde{\beta}_c,\,a=\beta-\tilde{\beta} $
	\begin{align*}
	a_c^{'}J(\tilde{\beta}_c)a&=a_c^{'} \sum_{i=1}^{n}  F(t_i(\tilde{\beta}_c))(1-Y_i)\Omega_c a_c 	\\
	& - a_c^{'}\sum_{i=1}^{n}  F(t_i(\tilde{\beta}_c))(1-F(t_i)(\tilde{\beta}_c))W_{ic}W_{ic}^{'})a_c\\
	&- a_c^{'}\sum_{i=1}^{n}F(t_i(\tilde{\beta}_c))(1-F(t_i(\tilde{\beta}_c)))(Y_i-1)\Omega_c\beta_cW_{ic}^{'}a_c \\
	&-a_c^{'}\sum_{i=1}^{n}F(t_i(\tilde{\beta}_c))(1-F(t_i(\tilde{\beta}_c)))(Y_i-0.5)2W_{ic}(\Omega_c\beta_c)^{'}a_c \\
	&-a_c\sum_{i=1}^{n}F(t_i(\tilde{\beta}_c))(1-F(t_i(\tilde{\beta}_c))) 2(Y_i-1)(Y_i-0.5)\Omega_c\beta_c(\Omega_c\beta_c)^{'}a_c\\
	&=A_1+A_2+A_3+A_4+A_5
	\end{align*}	
	
	We study the orders of each of these terms. Consider,
	\begin{align}
	A_3&=\sum_{i=1}^{n}F(t_i(\tilde{\beta}_c))(1-F(t_i(\tilde{\beta}_c)))(1-Y_i)a_c^{'}\Omega_c\beta_cW_{ic}^{'}a_c \nonumber \\
	&\leq \underset{ i} {sup} F(t_i(\tilde{\beta}_c))(1-F(t_i(\tilde{\beta}_c))) a^{'}\Omega \tilde{\beta} n\sum_{i=1}^{n}\dfrac{X_{i}^{'}+U_{i}^{'}}{n} a \nonumber \\
	&= \zeta^2 o_p(p_n)  \label{prop4_1}
	\end{align}
	In a similar way we can show that $A_4 = \zeta^2 o_p(p_n)$.
	Consider,
	\begin{align}
	A_1=a_c^{'} \sum_{i=1}^{n}  F(t_i(\tilde{\beta}_c))(1-Y_i)\Omega_c a_c & \leq 
	\underset{1\leq i \leq n}{sup}F(t_i(\tilde{\beta}_c)) a_c^{'}\Omega_c a_c n \sum_{i=1}^{n} (1-Y_i)/n \nonumber \\
	&  \leq 
	\underset{1\leq i \leq n}{sup}F(t_i(\tilde{\beta}_c)) a_c^{'}\Omega_c a_c n \sum_{i=1}^{n} (1-F(X_{ic}^{'}\tilde{\beta}_c))/n \nonumber \\
	& \quad + \text{smaller order term}\nonumber\\
	& \leq 
	\underset{1\leq i \leq n}{sup}F(t_i(\tilde{\beta}_c)) (1-\underset{1\leq i \leq n}{inf}F(X_{ic}^{'}\tilde{\beta}_c)) a_c^{'}\Omega_c a_c n \nonumber             
	\end{align}
	Note that the function $F(t)=1/(1+e^{-t})$ is a non-decreasing function and  $\Omega_c$ is positive semidefinite. Using $$ \underset{1\leq i \leq n}{sup } t_i(\tilde{\beta}_c) =\underset{1\leq i \leq n}{sup} W_{ci}^{'}\tilde{\beta}_c+(Y_i-0.5)\tilde{\beta}_c^{'}\Omega\tilde{\beta}_c \leq \underset{1\leq i \leq n}{sup} W_{ci}^{'}\tilde{\beta}_c+0.5\tilde{\beta}_c^{'}\Omega\tilde{\beta}_c=t_1, $$
	$$ \underset{1\leq i \leq n}{inf } t_i(\tilde{\beta}_c) =\underset{1\leq i \leq n}{inf} W_{ci}^{'}\tilde{\beta}_c+(Y_i-0.5)\tilde{\beta}_c^{'}\Omega\tilde{\beta}_c \geq \underset{1\leq i \leq n}{inf} W_{ci}^{'}\tilde{\beta}_c-0.5\tilde{\beta}_c^{'}\Omega\tilde{\beta}_c=t_2, $$
	
	we obtain, $\underset{1\leq i\leq n}{sup} F(t_i(\tilde{\beta}_c)) \leq F\left(\underset{1\leq i\leq n}{sup} t_i(\tilde{\beta}_c)\right)\leq \dfrac{e^{t_1}}{1+e^{t_1}} $ and
	$\dfrac{e^{t_2}}{1+e^{t_2}}=  F(t_2)  \leq \underset{1\leq i\leq n}{inf} F(t_i(\tilde{\beta}_c))  .$ 
	
	Let $t_3= \underset{1\leq i\leq n}{inf} X_{ic}^{'}\tilde{\beta}_c,\,t_4= \underset{1\leq i\leq n}{sup} X_{ic}^{'}\tilde{\beta}_c   $.
	\begin{align*}
	A_1=a_c^{'} \sum_{i=1}^{n}  F(t_i(\tilde{\beta}_c))(1-Y_i)\Omega_c a_c & \leq     \dfrac{e^{t_1}}{(1+e^{t_1})(1+e^{t_3})} \lambda_{max}(\Omega)\zeta^2 p_n 
	\end{align*}

	%%\begin{align}
	%%A_4&=-a_c\sum_{i=1}^{n}F(t_i(\tilde{\beta}_c))(1-F(t_i(\tilde{\beta}_c))) 2(y_i-1)(y_i-0.5)\Omega_c\beta_c(\Omega_c\beta_c)^{'}a_c \nonumber \\
	%%     &\leq - \dfrac{e^{t_2}}{(1+e^{t_2})(1+e^{t_1})} a_c^{'}\Omega_c\beta_c(\Omega_c\beta_c)^{'}a_cn \sum_{i=1}^{n} 2(y_i-1)(y_i-0.5)/n \nonumber \\ 
	%%    & \leq - \dfrac{e^{t_2}}{(1+e^{t_2})(1+e^{t_1})} a_c^{'}\Omega_c\beta_c(\Omega_c\beta_c)^{'}a_cn \underset{1\leq i \leq n}{inf} (1-F(x_{ic}^{'}\tilde{\beta}_c)) \nonumber \\
	%%   & \leq - \dfrac{e^{t_2}}{(1+e^{t_2})(1+e^{t_1})} a_c^{'}\Omega_c\beta_c(\Omega_c\beta_c)^{'}a_cn  (1-\underset{1\leq i \leq n}{sup}F(x_{ic}^{'}\tilde{\beta}_c)) \nonumber \\
	%%    & \leq - \dfrac{e^{t_2}}{(1+e^{t_2})(1+e^{t_1})} a_c^{'}\Omega_c\beta_c(\Omega_c\beta_c)^{'}a_cn\dfrac{1}{1+e^{t_4}} \nonumber \\
	%%    & -c {prop4_2}
	%%\end{align}

	\begin{align}
	A_4&=-a_c\sum_{i=1}^{n}F(t_i(\tilde{\beta}_c))(1-F(t_i(\tilde{\beta}_c))) 2(Y_i-1)(Y_i-0.5)\Omega_c\beta_c(\Omega_c\beta_c)^{'}a_c \nonumber \\
	&<  0  \label{prop4_2}
	\end{align}

	\begin{align}
	A_2 &= - a_c^{'}\sum_{i=1}^{n}  F(t_i(\tilde{\beta}_c))(1-F(t_i)(\tilde{\beta}_c))(W_{ic}W_{ic}^{'})a_c \nonumber \\
	& \leq - \dfrac{e^{t_2}}{(1+e^{t_2})(1+e^{t_1})} n a^{'}\lambda_{min}\left(\sum_{i=1}^{n} \dfrac{W_{ic}W_{ic}^{'}}{n}\right)a \nonumber \\
	&\quad +\text{smaller order term} \nonumber \\
	A_1+A_2 & \leq  \dfrac{e^{t_1}}{(1+e^{t_1})(1+e^{t_3})} \lambda_{max}(\Omega)\zeta^2 p_n \nonumber \\ &-\dfrac{e^{t_2}}{(1+e^{t_2})(1+e^{t_1})}  \lambda_{min}\left(\sum_{i=1}^{n} \dfrac{W_{ic}W_{ic}^{'}}{n}\right)\zeta^2 p_n \nonumber \\
	&= \dfrac{\zeta^2 p_n}{1+e^{t_1}}\left(\dfrac{e^{t_1}}{1+e^{t_3}} \lambda_{max}(\Omega) -  \dfrac{e^{t_2}}{1+e^{t_2}}  \lambda_{min}\left(\sum_{i=1}^{n} \dfrac{W_{ic}W_{ic}^{'}}{n}\right)\right) \nonumber   %\label{prop4_3}
	\end{align}
	Let $\lambda_{min}\left(\sum_{i=1}^{n} \dfrac{W_{ic}W_{ic}^{'}}{n}\right)=B $.  From Assumption \ref{eigen} 
	\begin{align*}
	\lambda_{max}(\Omega)\text{exp}(\lambda_{max}(\Omega)m) & \leq B \dfrac{\text{exp}(inf W_{ic}^{'}\tilde{\beta}-sup W_{ic}^{'}\tilde{\beta})}{1+\text{exp}(inf W_{ic}^{'}\tilde{\beta})} \\
	\lambda_{max}(\Omega) & \leq  B \dfrac{\text{exp}(inf W_{ic}^{'}\tilde{\beta}-sup w_{ic}^{'}\tilde{\beta}-\lambda_{max}(\Omega)\| \tilde{\beta}\|^2)}{1+\text{exp}(inf W_{ic}^{'}\tilde{\beta})}  \\
	& \leq  B \dfrac{\text{exp}(inf W_{ic}^{'}\tilde{\beta}-sup W_{ic}^{'}\tilde{\beta}-\lambda_{max}(\Omega)\| \tilde{\beta}\|^2)}{1+\text{exp}(inf W_{ic}^{'}\tilde{\beta}-0.5\lambda_{min}(\Omega) \| \tilde{\beta}\|^2 )}\\
	& \leq  B \dfrac{\text{exp}(inf W_{ic}^{'}\tilde{\beta}-sup W_{ic}^{'}\tilde{\beta}-\tilde{\beta}^{'}\Omega \tilde{\beta})}{1+\text{exp}(inf W_{ic}^{'}\tilde{\beta}-0.5\tilde{\beta}^{'}\Omega \tilde{\beta}) }\\
	& \leq B  \dfrac{\text{exp}(inf W_{ic}^{'}\tilde{\beta}-0.5\tilde{\beta}^{'}\Omega \tilde{\beta})}
	{1+\text{exp}(inf W_{ic}^{'}\tilde{\beta}-0.5\tilde{\beta}^{'}\Omega \tilde{\beta})} \dfrac{1+\text{exp}( inf X_{ic}^{'} \tilde{\beta})} {\text{exp}(sup W_{ic}^{'}\tilde{\beta}+0.5\tilde{\beta}^{'}\Omega \tilde{\beta})}\\
	\lambda_{max}(\Omega) \dfrac{\text{exp}(sup W_{ic}^{'}\tilde{\beta}+0.5\tilde{\beta}^{'}\Omega \tilde{\beta})}{1+\text{exp}( inf W_{ic}^{'} \tilde{\beta})} & \leq  B  \dfrac{\text{exp}(inf W_{ic}^{'}\tilde{\beta}-0.5\tilde{\beta}^{'}\Omega \tilde{\beta})}
	{1+\text{exp}(inf W_{ic}^{'}\tilde{\beta}-0.5\tilde{\beta}^{'}\Omega \tilde{\beta})}     \\
	\end{align*}
	Thus, $A_1+A_2 \leq -c\zeta^2p_n$
	and along with   \eqref{prop4_1},\eqref{prop4_2} we get the result.
	
\end{proof}	

\subsection{Appendix B }

\begin{proposition} \label{c_prop1}
	$$ \| U(\tilde{\beta})\|=O_p(\sqrt{n}).$$
\end{proposition}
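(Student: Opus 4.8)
The plan is to follow the template of Proposition~\ref{score}: establish that $E\|U(\tilde\beta)\|^2 = O(n)$ and then conclude by Markov's inequality. First I would isolate the randomness. Since the covariates $X_i(\cdot)$ are deterministic and $\bar X=0$ under the standing normalization, the only random inputs are the Karhunen--Lo\`eve error coordinates $U_{ik}$, which are independent in $k$ with $U_{ik}\sim N(0,\lambda_k)$, and the regression noise $\varepsilon_i = Y_i-\tilde\beta_0-\mathbf X_i'\tilde\beta$. Writing $\mathbf W_i^c = \mathbf X_i+\mathbf U_i-\overline{\mathbf U}$ and $Y_i^c = \mathbf X_i'\tilde\beta+\varepsilon_i-\bar\varepsilon$, I would substitute these into the expression for $U(\tilde\beta)$ in \eqref{c_est5} and split $U(\tilde\beta)=\sum_{i=1}^n\xi_i + R_n$, where $\xi_i$ depends only on $(\mathbf X_i,\mathbf U_i,\varepsilon_i)$ (so the $\xi_i$ are independent) and $R_n$ collects the terms carrying a factor $\overline{\mathbf U}$ or $\bar\varepsilon$ produced by the sample-mean centering. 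Using $\|\overline{\mathbf U}\|=O_p(n^{-1/2})$, $|\bar\varepsilon|=O_p(n^{-1/2})$ together with the exact identities $\sum_i\mathbf X_i=\mathbf 0$ and $\sum_i Y_i^c=0$, the pieces of $R_n$ reduce to quantities such as $n\overline{\mathbf U}\,\overline{\mathbf U}'\tilde\beta$ and $n\bar\varepsilon\,\overline{\mathbf U}$, which are $O_p(1)=o_p(\sqrt n)$; hence it suffices to bound $E\big\|\sum_{i=1}^n\xi_i\big\|^2$.

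Because the conditional-score estimating function in \eqref{est1}--\eqref{c_est3} is unbiased at the true parameter and $U(\tilde\beta)$ is obtained from it by an invertible reparametrization (the passage to \eqref{c_est5}), the independent-sum part has $E[\xi_i]=\mathbf 0$, so $E\big\|\sum_i\xi_i\big\|^2 = \sum_{i=1}^n E\|\xi_i\|^2 = \sum_{i=1}^n\sum_{l=1}^{p_n}E[\xi_{il}^2]$. The crux is to show $\sum_{l=1}^{p_n}E[\xi_{il}^2]\le C$ uniformly in $i$ and $p_n$. To this end I would expand $\xi_{il}$ into the contributions of the four pieces of \eqref{c_est5}: a piece linear in $W_{il}^c=X_{il}+U_{il}$, a piece of the form $W_{il}^c(\mathbf W_i^{c\prime}\tilde\beta)$, and pieces equal to $\lambda_l\tilde\beta_l$ times $(Y_i^c)^2$ or $Y_i^c$, plus the cross terms. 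Using $\|\mathbf X_i\|^2\le m_2$ (Assumption~\ref{x}), $\|\tilde\beta\|\le m_1$ (Assumption~\ref{beta}), $\|\Omega\|<\infty$ (Assumption~\ref{omega}), Gaussianity of $\mathbf U_i$ and $\varepsilon_i$, and the structural fact that the error covariance operator $\mathcal K$ is trace class, i.e.\ $\sum_{k=1}^{\infty}\lambda_k<\infty$ (exactly the input exploited in Proposition~\ref{score}), each contribution sums over $l$ to an $O(1)$ quantity: e.g.\ $\sum_l X_{il}^2\le m_2$, $\sum_l\lambda_l^2\tilde\beta_l^2\le\|\Omega\|^2m_1^2$, $\sum_l E[U_{il}^2\cdot(\text{bounded-variance factor})]\lesssim\sum_l\lambda_l<\infty$, and the quartic error moments $E\|\mathbf U_i\|^4$ are finite and are handled exactly as in Proposition~\ref{score}. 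This gives $\sum_l E[\xi_{il}^2]=O(1)$ uniformly, hence $E\big\|\sum_i\xi_i\big\|^2=O(n)$, and therefore $E\|U(\tilde\beta)\|^2=O(n)+o(n)=O(n)$; Markov's inequality then yields $\|U(\tilde\beta)\|=O_p(\sqrt n)$.

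I expect the main obstacle to be the bookkeeping in the core bound above. The summand $\xi_i$ is quadratic in $\mathbf W_i$ through the $\mathbf W_i^c\mathbf W_i^{c\prime}\tilde\beta$ and $(Y_i^c)^2\Omega\tilde\beta$ terms, so $\xi_{il}^2$ involves quartic Gaussian moments, and one must organize the expansion so that the pieces that are genuinely of size $O(n)$ per observation --- those carrying neither a summable eigenvalue $\lambda_k$ nor an $\ell^2$-coordinate $X_{ik}$ --- cancel; that cancellation is precisely the unbiasedness $E[\xi_i]=\mathbf 0$ built into the conditional-score construction, so the argument must invoke it rather than bound term by term. A secondary, more routine task is confirming that the sample-mean centering remainder $R_n$ is indeed $o_p(\sqrt n)$, which relies on the variance orders of $\overline{\mathbf U}$ and $\bar\varepsilon$ noted above together with Assumptions~\ref{x} and~\ref{beta}.
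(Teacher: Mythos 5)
Your plan takes a genuinely different route from the paper's proof, and it is the sounder one. The paper bounds $\|U(\tilde\beta)\|$ by the triangle inequality as $A_1+A_2+A_3+A_4$, one term for each sum in \eqref{c_est5}, and then controls each term separately, e.g.\ $A_1^2\le c\left(\sum_i\|\mathbf W_i^c\|^2\right)\left(\sum_i\|Y_i^c\|^2\right)$; since both factors are $O_p(n)$ this only gives $A_1=O_p(n)$, and indeed each of the four sums has a nonvanishing mean of order $n$ in norm (for instance $\sum_i Y_i^{c2}\Omega\tilde\beta\approx n\,\mathrm{var}(Y)\,\Omega\tilde\beta$), so no term-by-term argument can deliver $O_p(\sqrt n)$ --- the rate comes only from cancellation across the four terms. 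Your proposal is organized around exactly that cancellation: center into independent mean-zero summands $\xi_i$ plus a centering remainder $R_n=o_p(\sqrt n)$, use $E[\xi_i]=\mathbf 0$ and independence to get $E\|\sum_i\xi_i\|^2=\sum_iE\|\xi_i\|^2$, and bound $E\|\xi_i\|^2$ uniformly via Assumptions \ref{beta}--\ref{omega} and the trace-class property $\sum_k\lambda_k<\infty$. This is precisely the (correct) argument the paper itself uses in the binary case, Proposition \ref{score}, and what it buys here is a proof that actually closes. One caveat: $E[\xi_i]=\mathbf 0$ does not follow abstractly from ``unbiasedness plus invertible reparametrization''; it depends on the exact algebraic form of \eqref{c_est5}. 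Read literally, with the first term being the vector $\mathbf W_i^cY_i^c$ scaled by $\beta'\Omega\beta$, the per-observation means do not cancel; they cancel for the form $-Y_i^c(\mathbf W_i^{c\prime}\beta)\,\Omega\beta$ obtained by substituting $\beta_0=\overline Y-\beta'\overline{\mathbf W}$ into \eqref{c_est4} and using $\sigma^2\Omega=\Omega_1$. So you must carry out that substitution and verify the zero mean explicitly rather than assert it; once that is done, the rest of your outline (the quartic Gaussian moment bounds and the $R_n$ estimates) is routine.
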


\begin{proof}
	\begin{align*}
	\|U(\tilde{\beta})\|&\leq \|\sum\limits_{i=1}^{n} \mathbf{W}_i^cY_i^c\tilde{\beta}^{'}\Omega\tilde{\beta} \| +\| \sum\limits_{i=1}^{n}  Y^{c2}_i\Omega\tilde{\beta}\|+\|\sum\limits_{i=1}^{n}\mathbf{W}_i^c\mathbf{W}_i^{c'}\tilde{\beta}\|+\|\sum\limits_{i=1}^{n}\mathbf{W}_i^cY_i^c\| \\
	& = A_1+A_2+A_3+A_4
	\end{align*}
	We now examine each of these terms. Using Assumption \ref{beta},\,\ref{omega}, 
	\begin{align*}
	&A_1^2 =\|\sum\limits_{i=1}^{n} \mathbf{W}_i^cY_i^c\tilde{\beta}^{'}\Omega\tilde{\beta} \|^2 \leq  c \left(\sum\limits_{i=1}^{n} \|\mathbf{W}_i^c\|\|Y_i^c\|\right)^2 \leq c \sum\limits_{i=1}^{n} \|\mathbf{W}_i^c\|^2\sum\limits_{i=1}^{n} \|\mathbf{Y}_i^c\|^2 \\
	&\sum\limits_{i=1}^{n} E\|\mathbf{W}_i^c\|^2 = \sum\limits_{i=1}^{n} E\|\mathbf{W}_i -\overline{\mathbf{W}}\|^2\leq \sum\limits_{i=1}^{n}  
	E \|\mathbf{W}_i\|^2+2n\|\overline{\mathbf{X}}\|^2+2n\|\overline{\mathbf{U}}\|^2.
	\end{align*}
	Consider,
	\begin{align*}
	&\sum\limits_{i=1}^{n}  E \|\mathbf{W}_i\|^2 \leq  \sum\limits_{i=1}^{n} \| \mathbf{X}_i\|^2+ E\|\mathbf{U}_i\|^2 \leq cn+ \sum\limits_{i=1}^{n}\sum\limits_{l=1}^{p_n} E(u_{il}^2)=cn+\sum\limits_{i=1}^{n}\sum\limits_{l=1}^{p_n} \lambda_l =O(n)\\
	& nE\|\overline{\mathbf{U}}\|^2 = nE\| \sum\limits_{i=1}^{n} \dfrac{\mathbf{U}_i}{n} \|^2 = n^{-1}E\left(\sum\limits_{l=1}^{p_n} \left(\sum\limits_{i=1}^{n} u_{il} \right)^2 \right)=n^{-1} \sum\limits_{l=1}^{p_n}\sum\limits_{i=1}^{n} Eu_{il}^2+0=O(1).
	\end{align*}
	Using the fact that $\|\overline{\mathbf{X}}\|=O(1)$ and $2\sum\limits_{i=1}^{n} \|\mathbf{Y}_i^c\|^2 =O_p(n)  $  we get $A_1=O_p(\sqrt{n})$. Similarly we can prove that the other terms are of the same order thus proving the result.
\end{proof}

Let $J(\beta)=\left(\dfrac{\partial U(\beta)}{\partial \beta}\right)$.

\begin{proposition}\label{c_prop2}
	$\underset{\| \beta-\tilde{\beta}\|= \zeta \sqrt{p_n/n}}{\sup}
	\underset{\|b\|=1}{\sup} | b^{'} (J(\beta)-J(\tilde{\beta} )b | \leq O_p(\sqrt{np_n})$
	
\end{proposition}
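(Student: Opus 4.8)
The plan is to compute the Jacobian $J(\beta)=\partial U(\beta)/\partial\beta$ directly from the closed form of $U(\beta)$ in \eqref{c_est5} and to exploit that $U(\beta)$ is affine in $\beta$ apart from the single scalar quantity $\beta^{T}\Omega\beta$ that multiplies the $\beta$-free vector $\sum_{i=1}^{n}\mathbf{W}_i^{c}Y_i^{c}$. Differentiating the four summands of $U(\beta)$ term by term gives
$$J(\beta)=-2\Bigl(\sum_{i=1}^{n}\mathbf{W}_i^{c}Y_i^{c}\Bigr)\beta^{T}\Omega+\Bigl(\sum_{i=1}^{n}Y_i^{c2}\Bigr)\Omega-\sum_{i=1}^{n}\mathbf{W}_i^{c}\mathbf{W}_i^{cT},$$
and only the first summand depends on $\beta$, so that
$$J(\beta)-J(\tilde{\beta})=-2\Bigl(\sum_{i=1}^{n}\mathbf{W}_i^{c}Y_i^{c}\Bigr)(\beta-\tilde{\beta})^{T}\Omega ,$$
a rank-one matrix; this is the structural fact the whole argument hinges on.

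Writing $v=\sum_{i=1}^{n}\mathbf{W}_i^{c}Y_i^{c}$, for any unit vector $b$ the quadratic form splits as $b^{T}(J(\beta)-J(\tilde{\beta}))b=-2\,(b^{T}v)\bigl((\beta-\tilde{\beta})^{T}\Omega b\bigr)$, so by Cauchy--Schwarz and Assumption \ref{omega},
$$\sup_{\|b\|=1}\bigl|\,b^{T}(J(\beta)-J(\tilde{\beta}))b\,\bigr|\le 2\,\|v\|\,\|\Omega\|\,\|\beta-\tilde{\beta}\|\le c\,\|v\|\,\|\beta-\tilde{\beta}\| .$$
This bound is already free of $b$, and on the sphere $\|\beta-\tilde{\beta}\|=\zeta\sqrt{p_n/n}$ the last factor is constant, so both suprema collapse and it remains only to bound $\|v\|$. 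For this I would reuse the reasoning applied to the term $A_4$ in the proof of Proposition \ref{c_prop1}: by Cauchy--Schwarz $\|v\|^{2}\le\bigl(\sum_i\|\mathbf{W}_i^{c}\|^{2}\bigr)\bigl(\sum_i(Y_i^{c})^{2}\bigr)$, where Parseval together with Assumptions \ref{beta} and \ref{x}, the trace-class property of $\mathcal{K}$ (so $\sum_k\lambda_k<\infty$), and the fact that the centering terms $\overline{\mathbf{W}},\overline{Y}$ are $O_p(1)$ give $\sum_i\|\mathbf{W}_i^{c}\|^{2}=O_p(n)$ and $\sum_i(Y_i^{c})^{2}=O_p(n)$, hence $\|v\|=O_p(n)$. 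Combining, $\sup_{\|\beta-\tilde\beta\|=\zeta\sqrt{p_n/n}}\sup_{\|b\|=1}\bigl|b^{T}(J(\beta)-J(\tilde{\beta}))b\bigr|\le c\,O_p(n)\cdot\zeta\sqrt{p_n/n}=\zeta\,O_p(\sqrt{np_n})$, which is the assertion.

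The computation is routine and there is no genuine obstacle; the two points that need attention are (i) noticing that $J(\beta)-J(\tilde{\beta})$ is rank one, since this is precisely what prevents the supremum over $\|b\|=1$ from contributing a factor of $p_n$, and (ii) pinning down the order of $\|\sum_i\mathbf{W}_i^{c}Y_i^{c}\|$ --- here the crude bound $O_p(n)$ suffices and yields exactly the rate $\sqrt{np_n}$ in the statement, so no cancellation argument is required.
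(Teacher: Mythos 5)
Your proposal is correct and follows essentially the same route as the paper: differentiate $U(\beta)$ from \eqref{c_est5}, observe that $J(\beta)-J(\tilde{\beta})$ depends linearly on $\beta-\tilde{\beta}$ through the single $\beta$-dependent term, and bound the quadratic form by $c\,\|\Omega\|\,\|\beta-\tilde{\beta}\|\sum_i |Y_i^c|\,\|\mathbf{W}_i^c\|$ with the crude Cauchy--Schwarz order $O_p(n)$ for the sum. The only (immaterial) divergence is algebraic: your $J(\beta)-J(\tilde\beta)=-2v(\beta-\tilde\beta)^{T}\Omega$ is the literal rank-one derivative of \eqref{c_est5}, whereas the paper records the difference as a scalar multiple of $\Omega$, but both reduce to the same final bound $\zeta\,O_p(\sqrt{np_n})$.
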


\begin{proof}
	We can easily show that
	$$J(\beta)=-\sum\limits_{i=1}^{n}\Omega(\mathbf{W^c}_i'\beta)Y_i^c-\Omega\beta \mathbf{W}_i^{c'}Y_i^c+\Omega Y_i^{c2}-\mathbf{W}_i^c\mathbf{W}_i^{c'}. $$
	Then,
	\begin{align*}
	b^{'}(J(\beta)-J(\tilde{\beta}))b &=b^{'}\left(\sum\limits_{i=1}^{n} -2\Omega(\mathbf{W}_i^{c'}\beta)Y_i^c+\Omega Y_i^{c2}-\mathbf{W}_i^c\mathbf{W}_i^{c'}\right)b\\
	& \quad -b^{'}\left(\sum\limits_{i=1}^{n} -2\Omega(\mathbf{W}_i^{c'}\tilde{\beta})Y_i^c+\Omega Y_i^{c2}-\mathbf{W}_i^c\mathbf{W}_i^{c'}\right)b\\
	&=b^{'}\left( \sum\limits_{i=1}^{n} 2\Omega(\mathbf{W}_i^{c'}\tilde{\beta})Y_i^c-
	2\Omega(\mathbf{W}_i^{c'}\beta)Y_i^c\right) b\\
	&= b^{'}\left( \sum\limits_{i=1}^{n} 2Y_i^c\Omega\mathbf{W}_i^{c'}(\tilde{\beta}-\beta) \right)b\\
	|b^{'}(J(\beta)-J(\tilde{\beta}))b| & \leq \sum\limits_{i=1}^{n} | b^{'}2Y_i^c\Omega\mathbf{W}_i^{c'}(\tilde{\beta}-\beta)b| \\
	& \leq 2\|\Omega\| \|\tilde{\beta}-\beta\|\|b\|^2  \sum\limits_{i=1}^{n} \|Y_i^c\|\|
	\mathbf{W}_i^{c'}\| .
	\end{align*}
	Taking supremum
	\begin{align*}
	\underset{\| \beta-\tilde{\beta}\|= \zeta \sqrt{p_n/n}}{\sup}
	\underset{\|b\|=1}{\sup}| b^{'}(J(\beta)-J(\tilde{\beta}))b| \leq O_p(\sqrt{np_n}) .
	\end{align*}

\end{proof}

\begin{proposition}\label{c_prop3}
	$$\underset{\| \beta-\tilde{\beta}\|= \zeta \sqrt{p_n/n}}{sup} (\beta-\tilde{\beta})^{'}J(\tilde{\beta})(\beta-\tilde{\beta})<-c\zeta^2 p_n+ \zeta^2o_p(p_n) $$
\end{proposition}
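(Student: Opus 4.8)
The plan is to substitute into $a^{'}J(\tilde\beta)a$ the explicit Jacobian recorded at the start of the proof of Proposition~\ref{c_prop2}, namely $J(\tilde\beta)=\sum_{i=1}^{n}\{-\Omega(\mathbf{W}_i^{c'}\tilde\beta)Y_i^c-\Omega\tilde\beta\,\mathbf{W}_i^{c'}Y_i^c+\Omega Y_i^{c2}-\mathbf{W}_i^c\mathbf{W}_i^{c'}\}$, and contract with $a:=\beta-\tilde\beta$, which lies on the sphere $\|a\|=\zeta\sqrt{p_n/n}$. Writing $\mathbf{s}=\sum_i\mathbf{W}_i^cY_i^c$, $S_{YY}=\sum_i Y_i^{c2}$ and $M=\sum_i\mathbf{W}_i^c\mathbf{W}_i^{c'}$, this gives
\begin{align*}
a^{'}J(\tilde\beta)a=-(a^{'}\Omega a)\,\tilde\beta^{'}\mathbf{s}-(a^{'}\Omega\tilde\beta)(a^{'}\mathbf{s})+(a^{'}\Omega a)\,S_{YY}-a^{'}Ma ,
\end{align*}
in which $-a^{'}Ma\le-\lambda_{\min}(M/n)\,\zeta^2 p_n$ is the term that should eventually dominate.

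Next I would replace $\mathbf{s}$, $S_{YY}$ and $M$ by their deterministic leading parts. Writing $Y_i^c=\mathbf{X}_i^{c'}\tilde\beta+\varepsilon_i^c$ with $\varepsilon_i^c$ the centered Gaussian error and $\mathbf{W}_i^c=\mathbf{X}_i^c+\mathbf{U}_i^c$, elementary second-moment bounds give $n^{-1}\mathbf{s}=S_n\tilde\beta+O_p(n^{-1/2})$, $n^{-1}S_{YY}=\tilde\beta^{'}S_n\tilde\beta+\sigma^2+o_p(1)$ and $n^{-1}M=S_n+\Omega_1+E_n$ with $\|E_n\|=o_p(1)$, where $S_n=n^{-1}\sum_i\mathbf{X}_i^c\mathbf{X}_i^{c'}$ is precisely the matrix appearing in Assumption~\ref{vary}. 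The bound on $E_n$ uses that $\Omega_1$ is trace class (it is a covariance operator on $L^2$), whence $E\|n^{-1}\sum_i\mathbf{U}_i\mathbf{U}_i^{'}-\Omega_1\|^2=n^{-1}\{(\sum_k\lambda_k)^2+\sum_k\lambda_k^2\}=O(n^{-1})$ uniformly in $p_n$, while Assumption~\ref{x} controls $\|\mathbf{X}_i^c\|$ in the $\mathbf{X}$--$\mathbf{U}$ cross terms; Assumption~\ref{rate} then guarantees that, once multiplied by $n$ and $\|a\|^2=\zeta^2 p_n/n$, all the resulting remainders are $\zeta^2 o_p(p_n)$.

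The key step is algebraic. Since $\Omega=\Omega_1/a(\tilde\phi)$ with $a(\tilde\phi)=\sigma^2$ in the Gaussian case, $\sigma^2(a^{'}\Omega a)=a^{'}\Omega_1 a$, so the $\sigma^2(a^{'}\Omega a)$ piece coming out of $S_{YY}$ cancels the $-a^{'}\Omega_1 a$ piece coming out of $M$, and the two $\pm(a^{'}\Omega a)\,\tilde\beta^{'}S_n\tilde\beta$ pieces (from $\tilde\beta^{'}\mathbf{s}$ and from $S_{YY}$) cancel each other. What survives is
\begin{align*}
a^{'}J(\tilde\beta)a=-n\,a^{'}S_n a-n\,(a^{'}\Omega\tilde\beta)(a^{'}S_n\tilde\beta)+\zeta^2 o_p(p_n) .
\end{align*}
The first surviving term is at most $-\lambda_{\min}(S_n)\,\zeta^2 p_n$, and for the second, two uses of Cauchy--Schwarz together with Assumptions~\ref{beta},~\ref{x},~\ref{omega} give $|n(a^{'}\Omega\tilde\beta)(a^{'}S_n\tilde\beta)|\le n\|a\|^2\,\|\Omega\tilde\beta\|\,\|S_n\tilde\beta\|\le c\,\lambda_{\max}(\Omega)\,\zeta^2 p_n$. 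Thus $a^{'}J(\tilde\beta)a\le\{c\,\lambda_{\max}(\Omega)-\lambda_{\min}(S_n)\}\,\zeta^2 p_n+\zeta^2 o_p(p_n)$, and Assumption~\ref{vary}, which bounds $\lambda_{\min}(S_n)$ below by $\lambda_{\max}(\Omega_1)-\lambda_{\min}(\Omega_1)$ (again via $\sigma^2\Omega=\Omega_1$), forces the bracket to be a strictly negative constant; taking that constant as the $c$ of the statement finishes the argument, uniformly over the sphere.

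The main obstacle I anticipate is the complete absence of slack: the workhorse term $-a^{'}Ma$, the positive variance contribution $(a^{'}\Omega a)S_{YY}$, and the sign-indefinite cross term $(a^{'}\Omega\tilde\beta)(a^{'}S_n\tilde\beta)$ are all genuinely of order $\zeta^2 p_n$, so negativity of the leading coefficient rests on the exact cancellations produced by $\sigma^2\Omega=\Omega_1$ and on Assumption~\ref{vary} being sharp enough to absorb the cross term; making the contraction bound uniform in the direction $a$, and establishing $\|n^{-1}M-S_n-\Omega_1\|=o_p(1)$ uniformly in the diverging dimension $p_n$, are the delicate technical points.
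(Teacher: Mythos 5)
Your decomposition is carried out more carefully than the paper's own proof --- in particular you correctly keep the two cross terms $-(a'\Omega a)\,\tilde\beta'\mathbf{s}$ and $-(a'\Omega\tilde\beta)(a'\mathbf{s})$ distinct, where the paper lumps them into $-2(a'\Omega a)\,\tilde\beta'\mathbf{s}$ --- but your final step does not close, and the failure is substantive. After your exact cancellations you are left with $-n\,a'S_na-n(a'\Omega\tilde\beta)(a'S_n\tilde\beta)$, and for the first term to dominate you need something like $\|\Omega\tilde\beta\|\,\|S_n\tilde\beta\|<\lambda_{\min}(S_n)$ with a strict, $n$-uniform gap. Assumption \ref{vary} says only that $\lambda_{\max}(\Omega_1)\le\lambda_{\min}(S_n)+\lambda_{\min}(\Omega_1)$, i.e.\ $\lambda_{\min}(S_n)\ge\lambda_{\max}(\Omega_1)-\lambda_{\min}(\Omega_1)$; this lower bound can be zero, and in any case it says nothing about $\|\Omega\tilde\beta\|\,\|S_n\tilde\beta\|$, which involves $\lambda_{\max}(S_n)$ and $\|\tilde\beta\|$ and is not controlled by any listed assumption. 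So the bracket $\{c\,\lambda_{\max}(\Omega)-\lambda_{\min}(S_n)\}$ is not forced to be negative and your conclusion does not follow. A telling symptom: once you use $\sigma^2\Omega=\Omega_1$ to cancel the $\sigma^2(a'\Omega a)$ piece of $S_{YY}$ exactly against $-a'\Omega_1a$, Assumption \ref{vary} plays no role anywhere in your accounting, which signals that you are not using the hypothesis the proposition was written around.

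The paper reaches negativity by a different allocation. It bounds the three blocks separately: the positive block $n\sigma^2(a'\Omega a)\le\zeta^2p_n\lambda_{\max}(\Omega_1)$ and the block $-a'Ma\le-\zeta^2p_n\{\lambda_{\min}(S_n)+\lambda_{\min}(\Omega_1)\}$ are made jointly nonpositive via Assumption \ref{vary}, while the strict $-c\zeta^2p_n$ is extracted from the cross-term block $-2n(a'\Omega a)\tilde\beta'S_n\tilde\beta$, which the paper treats as bounded above by $-c\zeta^2p_n$. Your reorganization cancels exactly the term the paper relies on for strict negativity (your $\pm(a'\Omega a)\tilde\beta'S_n\tilde\beta$ cancellation), so the negativity must come from elsewhere, and you have not supplied an assumption that delivers it. To repair your version you would need an explicit extra hypothesis such as $\|\Omega\tilde\beta\|\,\|S_n\tilde\beta\|\le(1-\epsilon)\lambda_{\min}(S_n)$ together with $\lambda_{\min}(S_n)$ bounded away from zero; alternatively you could follow the paper and keep a negative multiple of $(a'\Omega a)\tilde\beta'S_n\tilde\beta$ alive, though that in turn requires $\lambda_{\min}(\Omega)$ bounded away from zero, which is itself delicate for a trace-class $\Omega_1$ as $p_n\to\infty$.
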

\begin{proof}
	Let $b= \beta-\tilde{\beta}$. Consider,
	
	\begin{align*}
	J(\tilde{\beta})=-2\sum\limits_{i=1}^{n}\Omega(\mathbf{W^c}_i'\tilde{\beta})Y_i^c+\Omega Y_i^{c2}-\mathbf{W}_i^c\mathbf{W}_i^{c'}
	\end{align*}
	We evaluate each term.
	\begin{align*}
	-2\sum\limits_{i=1}^{n}\Omega(\mathbf{W^c}_i'\tilde{\beta})Y_i^c  & =-2\Omega\tilde{\beta}^{'}\sum\limits_{i=1}^{n}Y_i^c(\mathbf{X}_i^c+\mathbf{U}_i^c)\\
	&=-2\Omega\tilde{\beta}^{'}n\sum\limits_{i=1}^{n}\dfrac{Y_i^c\mathbf{X}_i^c}{n}-2\Omega\tilde{\beta}^{'}n\sum\limits_{i=1}^{n}\dfrac{Y_i^c\mathbf{U}_i^c}{n}\\
	&=-2\Omega\tilde{\beta}^{'}n\sum\limits_{i=1}^{n}\dfrac{(\mathbf{X}_i^{c'}\tilde{\beta})^2}{n}-2\Omega\tilde{\beta}^{'}nE(Y_1^c\mathbf{U}_1^c) +\text{smaller order terms}\\
	&=-2\Omega\tilde{\beta}^{'}n\sum\limits_{i=1}^{n}\dfrac{\tilde{\beta}^{'}\mathbf{X}_i^c\mathbf{X}_i^{c'}\tilde{\beta}}{n}-2\Omega co_p(n) \\
	-b^{'}2\sum\limits_{i=1}^{n}\Omega(\mathbf{W^c}_i'\tilde{\beta})Y_i^cb &\leq
	-c\zeta^2p_n- \zeta^2o_p(p_n)
	\end{align*}
	Next consider,
	\begin{align*}
	b^{'}\sum\limits_{i=1}^{n} \Omega Y_i^{c2}b &=b^{'}\Omega b\sum\limits_{i=1}^{n} Y_i^{c2}=b^{'}\Omega b n \text{var}(Y_1)+\text{smaller order terms} \leq \zeta^2p_n \lambda_{max}(\Omega_1).
	\end{align*}
	Finally consider,
	\begin{align*}
	-n\sum\limits_{i=1}^{n}\dfrac{\mathbf{W}_i^c\mathbf{W}_i^{c'}}{n} &=
	-n\sum\limits_{i=1}^{n}\dfrac{(\mathbf{X}_i^c+\mathbf{U}_i^c)(\mathbf{X}_i^{c'}+\mathbf{U}_i^{c'})}{n}\\
	&= -n\sum\limits_{i=1}^{n} \dfrac{\mathbf{X}_i^c\mathbf{X}_i^{c'}}{n}-n
	\dfrac{\mathbf{X}_i^c\mathbf{U}_i^{c'}}{n}-n\dfrac{\mathbf{U}_i^c\mathbf{X}_i^{c'}}{n}-n\dfrac{\mathbf{U}_i^c\mathbf{U}_i^{c'}}{n}\\
	&=-n\sum\limits_{i=1}^{n} \dfrac{\mathbf{X}_i^c\mathbf{X}_i^{c'}}{n}- \Omega_1+\text{smaller order terms} \\
	-n b^{'}\sum\limits_{i=1}^{n}\dfrac{\mathbf{W}_i^c\mathbf{W}_i^{c'}}{n} b &=-nb^{'}\sum\limits_{i=1}^{n} \dfrac{\mathbf{X}_i^c\mathbf{X}_i^{c'}}{n} b-b^{'}\Omega_1 b+\text{smaller order terms} \\
	&\leq -\zeta^2 p_n\lambda_{min}\left(\sum\limits_{i=1}^{n} \dfrac{\mathbf{X}_i^c\mathbf{X}_i^{c'}}{n} \right)-\zeta^2 p_n\lambda_{min}(\Omega_1).
	\end{align*}
	
	From Assumption \ref{vary},\, we obtain he desired result.
\end{proof}

%%%%%%%%%%%%%%%%%%%%%%%%%%%%%%%%%%%%%%%%%%%%%%%%%%%%%%%%%%

%%%%%%%%%%%%%%%%%%%%%%%%%%%%%%%%%%%%%%%%%%%%%%%%%%%%%%%%%%%%%%%%%%%%%%%%%%%%%%%%%%%%%%%%%%%%%%%%%%%%%%%%%%%%%%%%%%%%%%%%%%%%
\vskip .65cm
\noindent
Yale University
\vskip 2pt
\noindent
E-mail: (sneha.jadhav@yale.edu)
\vskip 2pt

\noindent
Yale University
\vskip 2pt
\noindent
E-mail: (shuangge.ma@yale.edu)
% \vskip .3cm
%\centerline{(Received ???? 20??; accepted ???? 20??)}\par
\end{document}